\numberwithin{equation}{section}
\newtheorem{theorem}{Theorem}[section]
\newtheorem{cor}[theorem]{Corollary}
\newtheorem{Lemma}[theorem]{Lemma}
\def\N{{\mathbb N}}
\def\I{\textbf{I}}
\def\II{\textbf{II}}
\def\III{\textbf{III}}
\newcommand{\R}{\mathbb{R}}
\newcommand{\E}{\mathbb{E}}
\newcommand{\Ge}{\mathcal{L}}
\newcommand{\X}{\mathcal{I}} 
\begin{document}

\title[Poincar\'e type  inequalities for compact PJMP.]{Poincar\'e type  inequalities for compact degenerate   pure jump Markov processes.}

\author{Pierre Hodara  \and  Ioannis Papageorgiou}{ Pierre Hodara$^{*}$  \and  Ioannis Papageorgiou $^{**}$  \\  IME, Universidade de Sao Paulo }

 \thanks{\textit{Address:} Neuromat, Instituto de Matematica e Estatistica,
 Universidade de Sao Paulo, 
 rua do Matao 1010,
 Cidade Universitaria, 
 Sao Paulo - SP-  Brasil - CEP 05508-090.
\\ \text{\  \   \      } 
\textit{Email:} $^*$hodarapierre@gmail.com $^{**}$ ipapageo@ime.usp.br, papyannis@yahoo.com  \\
 This article was produced as part of the activities of FAPESP  Research, Innovation and Dissemination Center for Neuromathematics (grant 2013/ 07699-0 , S.Paulo Research Foundation); This article  is supported by FAPESP grant (2016/17655-8) $^{*}$ and  (2017/15587-8)$^{**}$ }
\keywords{Talagrand inequality, Poincar\'e inequality, brain neuron networks.}
\subjclass[2010]{  60K35,  26D10,       60G99,  } 



\begin{abstract}
We  aim in proving  Poincar\'e      inequalities for a class of pure jump Markov processes inspired by the model introduced in \cite{G-L} by Galves and L\"ocherbach to describe the behaviour of  interacting  brain neurons. In particular, we consider  neurons with degenerate jumps, i.e. that lose their memory when they spike, while the probability of a spike depends on the actual position and thus the past of the whole neural system.
\end{abstract}
 
\date{}
\maketitle 
 
\section{Introduction}
 
The aim of this paper is to prove Poincar\'e inequalities for the semigroup $P_t$, as well as for the invariant measure,     of the model introduced in \cite{G-L} by  Galves and L\"ocherbach, to    describe the activity of a biological neural network.    What is in particular interesting about the jump process in question is that it is characterized  by degenerate jumps, in the sense that after a particle (neuron) spikes,  it loses its memory by  jumping to zero.  Furthermore,   the probability of a spike of a particular neuron at any time   depends on  its actual position and thus the past of the whole neural system.

For $P_t$ the associated semigroup   at first  we prove some Poincar\'e type inequalities   of the  form
\begin{align*}Var_{ P_{t}}( f (x)) \leq   \alpha(t) \int_0^tP_s \Gamma(f,f)( x)ds +\beta(t) \sum_{i=1}^n\int_0^tP_s \Gamma(f,f)(\Delta_i ( x))ds.
\end{align*}
for any possible starting configuration  $x$. We give here the general form of the type of inequalities investigated in this paper, however in order to avoid to overload this introduction with technical details we postpone the definitions of classical quantities such as the "carr\'e du champ" and other notations used here to subsection  \ref{subsec:PoincaraInequalities}.

 Then, we restrict ourselves to the special case where the initial configuration $x$ in the domain of the invariant measure, and  we derive the stronger Poincar\'e  inequality 
\begin{align*}Var_{ P_{t}}( f (x)) \leq  \alpha(t)P_t\Gamma(f,f)(x)  +\beta \int _0^tP_s\Gamma(f,f)(x)ds.
\end{align*}
Then we show a Poincar\'e inequality for the invariant measure $\pi$
\[Var_{\pi}(f)\leq c\pi \left( \Gamma (f,f)\right).\] 
 Before we describe the model we present the neuroscience framework of the problem.

\subsection{the neuroscience framework}
The  activity of one neuron is described by the evolution of its membrane potential. This evolution presents from time to time a brief and high-amplitude depolarisation called action potential or spike. The spiking probability or rate of a given neuron depends on the value of its membrane potential. These spikes are the only perturbations of the membrane potential that can be transmitted from one neuron to another through chemical synapses. When a neuron spikes, its membrane potential is reset to $0$ and the post-synaptic neurons connected to it receive an additional amount of membrane potential.

From a probabilistic point of view, this activity can be described by a simple point process since the whole dynamic is characterised by the jump times. In the literature, Hawkes processes are often used in order to describe systems of interacting neurons, see \cite{C17}, \cite{D-L-O},  \cite{D-O}, \cite{G-L}, \cite{H-R-R} and \cite{H-L} for example. The reset to $0$ of the spiking neuron provides a variable length memory for the dynamic and therefore point-processes describing these systems are non-Markovian.

On the other hand, it is possible to describe the activity of the network with a process modelling not only the jump times but the whole evolution of the membrane potential of each neuron. This evolution needs then to be specified between the jumps. In \cite{H-K-L} the process describing this evolution follows a deterministic drift between the jumps, more precisely the membrane potential of each neuron is attracted with exponential speed towards an equilibrium potential. This process is then Markovian and belongs to the family of Piecewise Deterministic Markov Processes introduced by Davis (\cite{Davis84} and \cite{Davis93}). Such processes are widely used in probability modeling  of \textit{e.g.}\ biological or chemical phenomena (see \textit{e.g.} \cite{C-D-M-R} or \cite{PTW-10}, see \cite{ABGKZ} for an overview). The point of view we adopt here is close to this framework, but we work without drift between the jumps. We therefore consider a pure jump Markov process and will make use of the abbreviation PJMP in the rest of the present work.

We consider a process $X_t=(X_t^1,...,X_t^N),$ where $N$ is the number of neurons in the network and where for each neuron $i, \, 1 \le i \le N$ and each time $t \in \R_+,$  each variable $X_t^i$ represents the membrane potential of neuron $ i$ at time $t.$ Each membrane potential $X_t^i$ takes value in $\R_+.$ A neuron with membrane potential $x$ ``spikes" with intensity $\phi(x),$ where $\phi:\R_+ \to \R_+ $ is a given intensity function. When a neuron $i$ fires, its membrane potential is reset to $0,$ interpreted as resting potential, while the membrane potential of any post-synaptic neuron $j$ is increased by $W_{i \to j} \geq 0.$ Between two jumps of the system, the membrane potential of each neuron is constant.

Working with Hawkes processes allows to consider systems with infinitely many neurons, as in \cite{G-L} or \cite{H-L}. For our purpose, we need to work in a Markovian framework and therefore our process represents the membrane potentials of the neurons, considering a finite number $N$ of neurons. 

\subsection{the model}

Let $N > 1 $ be fixed and $(N^i(ds, dz))_{i=1,\dots,N}$ be a family of \textit{i.i.d.}\ Poisson random measures on $\R_+ \times \R_+ $ having intensity measure $ds dz.$ We study the Markov process $X_t = (X^{ 1 }_t, \ldots , X^{ N}_t )$
taking values in $\R_+^N$  and solving, for $i=1,\dots,N$, for $t\geq 0$,
\begin{eqnarray}\label{eq:dyn}
X^{ i}_t &= & X^{i}_0  -  \int_0^t \int_0^\infty 
X^{ i}_{s-}  1_{ \{ z \le  \phi ( X^{ i}_{s-}) \}} N^i (ds, dz) \\
&&+    \sum_{ j \neq i } W_{j \to i} \int_0^t\int_0^\infty  1_{ \{ z \le \phi ( X^{j}_{s-}) \}} N^j (ds, dz).
\nonumber
\end{eqnarray}  

In the above equation for each $j\neq i,\, W_{j \to i} \in \R_+$ is the synaptic weight describing the influence of neuron $j$ on neuron $i.$ Finally, the function $\phi:\R_+\mapsto \R_+$  is the intensity function.

The generator of the process $X$ is given  for any test function $ f : \R_+^N \to \R $  and $x \in \R_+^N$  is described by
\begin{equation}\label{eq:generator1}
\Ge f (x ) = \sum_{ i = 1 }^N \phi (x^i) \left[ f ( \Delta_i ( x)  ) - f(x) \right]
\end{equation}
where
\begin{equation}\label{eq:delta1}
(\Delta_i (x))_j =    \left\{
\begin{array}{lcl}
x^{j} +W_{i \to j}  & j \neq i    \text{ and }  x^{i} +W_{i \to j} \leq m   \\  
x^{j}   & j \neq i    \text{ and }  x^{i} +W_{i \to j} > m    \\
0 & j = i 
\end{array}
\right\}
\end{equation}
for some $m>0$. With this definition the process remains inside the compact set 
\begin{equation}\label{eq:defcompactD}
D:=\{x \in \R_+^N: x_i \leq m , \, 1 \leq i \leq N \}. 
\end{equation}
Furthermore, we also assume the following conditions about the intensity function:
\begin{align}\label{ass:dphipos}\phi(x)>cx  \ \text{for} \  x\in\R_+\end{align}
and
\begin{equation}\label{phi2}
\phi(x)\geq \delta
\end{equation} 
for some strictly positive constants $c$ and $\delta.$

The probability for a neuron to spike grows with its membrane potential so it is natural to think of the function $\phi$ as an increasing function. Condition \eqref{ass:dphipos} implies that this growth is at least linear. Condition \eqref{phi2} models the spontaneous activity of the system: whatever the configuration $x$ is, the system will always have a positive spiking rate.

\subsection{Poincar\'e type inequalities}\label{subsec:PoincaraInequalities}
  Our purpose is to show   Poincar\'e type inequalities for our PJMP, whose dynamic is similar to the model introduced in \cite{G-L}. We will investigate Poincar\'e type inequalities   at first  for the semigroup $P_t$ and then for the invariant measure $\pi$. Concerning the semigroup inequality we will study two different cases. The first, the general one, where the system starts from any possible initial configuration. Then, we restrict to initial configurations that belong to the domain of the invariant measure.

   Let us first describe the general framework and define the Poincar\'e inequalities on a discreet setting (see also \cite{SC}, \cite{D-SC}, \cite{W-Y}, \cite{A-L}  and \cite{Chaf}). At first we should note a convention we will widely use. For a  function $f$ and measure $\nu$ we will  write $\nu (f)$ for the expectation of the function $f$ with respect to the measure $\nu$, that is
   $$\nu (f)=\int fd\nu.$$ 
  
   We consider  a Markov process $(X_t)_{t\geq 0}$ which is described by the infinitesimal generator $\Ge$ and the associated  Markov semigroup $P_t f(x)=\E^x(f(X_t))$. For a semigroup and its associated infinitesimal generator we will need the following well know relationships: $\frac{d}{ds}P_s= \Ge P_s =P_s \Ge$ (see for example \cite{G-Z}).
   
   We define $\pi$ to be the invariant measure for the semigroup $(P_t)_{t\geq 0}$ if and only if 
   \[\pi P_t =\pi.\] 
   
   Furthermore, we define the "carr\'e du champ" operator by:
   $$\Gamma (f,g):=\frac{1}{2}(\Ge (fg)-f\Ge g- g\Ge f).$$

For the  PJMP  process  defined above with the specific generator $\Ge$ given by (\ref{eq:generator1}) 
  a simple calculation shows that the carr\'e  du champ takes the following form.
 \begin{align*}\Gamma (f,f)=\frac{1}{2}(\Ge f^2- 2 f\Ge f)= \frac{1}{2}(\sum_{ i = 1 }^N \phi (x^i) \left[ f ( \Delta_i ( x)  )- f(x) \right]^2).
 \end{align*}
 We say that a measure $\nu$   satisfies a Poincar\'e inequality if there exists constant $C >0$ independent of $f$, such that
 \[(SG) \ \ \  \ \ \ Var_{\nu}(f)\leq C\nu (\Gamma(f,f))\]
  where the variance of  a function $f$ with respect  to a measure $\nu$ is defined  with the usual way as:  $Var_{\nu}(f)=\nu(f-\nu(f))^2$. It should be noted that in the case where  the measure $\nu$ is the semigroup $P_t$, then the constant $C$ may  depend on $t$, $C=C(t)$. 
  
 In \cite{W-Y}, \cite{A-L}  and \cite{Chaf},   the Poincar\'e inequality (SG) for $P_t$ has been shown  for some  point processes, for a constant that depends on time $t$, while the stronger log-Sobolev inequality, has been disproved. The general method used in these papers, that will be followed also in the current work, is based on the so called semigroup method which shows the inequality for the semigroup $P_t$.  

The main difficulty here is that, for the pure jump Markov process that we examine in the current paper, the translation property 
$$\E^{x+y}f(z)=\E^{x}f(z+y)$$
used  in \cite{W-Y} and \cite{A-L} does not hold here. This appears to be important because the translation property  is a key element in these papers, since it allows  to bound the carr\'e  du champ by  comparing the mean $\E^{x}f(z)$ where the process  starts from  position $x$ with the mean $\E^{\Delta_i(x)}f(z)$ where it starts from $\Delta_i(x),$ the jump-neighbour of $x.$ However, we can still obtain Poincar\'e type inequalities, but with a constant $C(t)$ which is a polynomial of order higher than one. This order is higher than the constant  $C(t)=t$, the optimal   obtained in  \cite{W-Y} for a path space of  Poisson point processes. 

It should be noted, that the  the aforementioned translation property relates with the $\Gamma_2$ criterion (see \cite{Ba1} and \cite{Ba2}) for the Poincar\'e inequality (see discussion in subsection \ref{subsec2.2}). Since this is not satisfied in our case we obtain a Poincar\'e type inequality instead.

Before we present the results of the paper it is important to highlight an important distinction on the nature of the initial configuration from which  the process can start. We can classify the initial configurations according to the return probability to them. Recall that the  membrane potential $x_i$ of every neuron $i$ takes positive values within a compact set and that whenever a neuron $j$ different than $i$ spikes, the neuron $i$ jumps $W_{i \to j}$ positions up, while the only other movement it does is jumping to zero when it spikes.  That means that every variable $x_i$ can jump down only to zero while after the first jump, can only pass from a finite number of possible positions. Since the neurons stay still between spikes, that implies that   there is a finite number of possible configurations to which $X=(X^1,...X^N)$ can return after every neuron has spiked for the first time. This is the domain of the invariant measure $\pi$ of the semigroup $P_t$, and we will denote it as $\hat D$. Thus, if the initial configuration $x=(x^1,...x^N)$ does not belong to $\hat D$, after the process enters $\hat D$, it will never return back to this initial configuration. 

It should be noted that it is easy to find initial configurations $x=(x^1,...x^N)\notin \hat D$. For example one can  consider any $x$ such that at least one of the $x^i$s is not a sum of synaptic weights $W_{j \to i}$, or any $x$ with $x^i=x^j$ for every $i$ and $ j$.

   Below we present the     Poincar\'e   inequality   for the semigroup $P_t$ for general starting configurations.
 \begin{theorem} \label{theorem2}Assume the PJMP as described in (\ref{eq:generator1})-(\ref{phi2}). Then, for every $x\in D$, the following Poincar\'e type inequality holds.
\begin{align*}Var_{ E^x}( f (x_t)) \leq   \alpha(t) \int_0^tP_s \Gamma(f,f)( x)ds +\beta \sum_{i=1}^n\int_0^tP_s \Gamma(f,f)(\Delta_i ( x))ds
\end{align*}
with $\alpha(t)$  a second  order polynomial of the time $t$  that does not depend on the function $f$   and $\beta$ a constant. 
\end{theorem}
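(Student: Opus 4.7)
My plan is to apply the semigroup method in the spirit of \cite{W-Y,A-L,Chaf}, replacing the translation identity used there (which fails here, as noted in the discussion preceding the theorem) by a Duhamel expansion combined with a coupling argument. First I would start from the classical identity
$$Var_{\E^x}(f(X_t)) = P_tf^2(x) - (P_tf(x))^2 = 2\int_0^t P_\sigma\,\Gamma(P_{t-\sigma}f, P_{t-\sigma}f)(x)\,d\sigma,$$
obtained by differentiating $\sigma \mapsto P_\sigma((P_{t-\sigma}f)^2)$ and using $\Ge(g^2)-2g\Ge g = 2\Gamma(g,g)$. Since $\Gamma(P_uf,P_uf)(y) = \tfrac12\sum_i \phi(y^i)[P_uf(\Delta_i(y))-P_uf(y)]^2$, the task reduces to a pointwise control of the discrete gradients $P_uf(\Delta_i(y))-P_uf(y)$.

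The core estimate would come from the Duhamel formula $P_u f = f + \int_0^u P_r\Ge f\,dr$ applied at $\Delta_i(y)$ and $y$:
$$P_uf(\Delta_i(y))-P_uf(y) = [f(\Delta_i(y))-f(y)] + \int_0^u \big(P_r\Ge f(\Delta_i(y)) - P_r\Ge f(y)\big)\,dr.$$
Squaring via $(a+b)^2\le 2(a^2+b^2)$, Cauchy--Schwarz on the integral, and $(P_rh(z_1)-P_rh(z_2))^2 \leq 2P_r h^2(z_1) + 2P_r h^2(z_2)$ pointwise, one is reduced to controlling $P_r(\Ge f)^2$ at $y$ and $\Delta_i(y)$. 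The elementary bound $(\Ge f(z))^2 \leq C\,\Gamma(f,f)(z)$ (from Cauchy--Schwarz on the finite sum in \eqref{eq:generator1} and boundedness of $\phi$ on the compact set $D$ of \eqref{eq:defcompactD}), together with $[f(\Delta_i(y))-f(y)]^2 \leq \tfrac{2}{\delta}\phi(y^i)[f(\Delta_i(y))-f(y)]^2$ via \eqref{phi2}, gives after multiplying by $\phi(y^i)/2$ and summing
$$\Gamma(P_uf,P_uf)(y) \leq A\,\Gamma(f,f)(y) + B\,u\int_0^u\Big(P_r\Gamma(f,f)(y) + \sum_i P_r\Gamma(f,f)(\Delta_i(y))\Big)dr.$$
I would then substitute into the variance identity with $u=t-\sigma$ and apply $P_\sigma$ evaluated at $x$. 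The semigroup property $P_\sigma P_r = P_{\sigma+r}$ collapses the diagonal triple integral to $\int_0^t(tu-u^2/2)\,P_u\Gamma(f,f)(x)\,du$, bounded by $(t^2/2)\int_0^t P_u\Gamma(f,f)(x)\,du$, and together with the first term this produces the quadratic coefficient $\alpha(t)$.

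The main obstacle is the off-diagonal term $\E^x[P_r\Gamma(f,f)(\Delta_i(X_\sigma))]$, which is \emph{not} equal to $P_{\sigma+r}\Gamma(f,f)(\Delta_i(x))$ because $\Delta_i$ does not commute with $P_\sigma$; this is exactly the place where the missing translation property would otherwise have helped. My plan to close this gap is via a coupling argument: run two copies of the process started at $X_\sigma$ and at $\Delta_i(X_\sigma)$ driven by common Poisson measures $N^j$, and exploit the reset-to-zero rule in \eqref{eq:delta1} together with the uniform lower bound \eqref{phi2}, which together ensure that the two copies synchronise after a waiting time for a spike of neuron $i$ of mean $1/\delta$. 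The resulting comparison should yield
$$\E^x[P_r\Gamma(f,f)(\Delta_i(X_\sigma))] \leq c_1\,P_{\sigma+r}\Gamma(f,f)(\Delta_i(x)) + c_2\,P_{\sigma+r}\Gamma(f,f)(x)$$
with constants $c_1, c_2$ independent of the time variables, so that this contribution integrates into $\beta\sum_i \int_0^t P_s\Gamma(f,f)(\Delta_i(x))\,ds$ with $\beta$ constant, completing the proof.
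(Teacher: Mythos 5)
Your set-up (the variance identity, the reduction to controlling $P_uf(\Delta_i(y))-P_uf(y)$, the Dynkin/Duhamel expansion, and the crude diagonal bound via $(\Ge f)^2\le 2M\,\Gamma(f,f)$) is sound, and up to that point it is even a simpler route than the paper's Lemmas \ref{techn1}--\ref{techn}. The gap is the final coupling step, and it is not a small one: it sits exactly where the failure of the translation property must be paid for, i.e.\ at the technical heart of the proof. First, the justification you give is incorrect: if two copies are driven by the same Poisson measures from $z$ and $\Delta_i(z)$, a spike of neuron $i$ only coalesces the $i$-th coordinate; every coordinate $j\neq i$ still differs by $W_{i\to j}$ after that spike, and since the two copies have different intensities the common-noise coupling generically produces unmatched spikes that create new discrepancies, so there is no merging after a waiting time of mean $1/\delta$ (coalescence would require all neurons to spike in a matched way). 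Second, the inequality you need, $\E^x[g(\Delta_i(X_\sigma))]\le c_1\E^{\Delta_i(x)}[g(X_\sigma)]+c_2\E^x[g(X_\sigma)]$ with $c_1,c_2$ uniform in $\sigma$, fails at the level of kernels: for $y=\Delta_i(\Delta_j(x))$ with $j\neq i$ one has $\P^x(\Delta_i(X_\sigma)=y)\ge\pi_\sigma(x,\Delta_j(x))\asymp\sigma$ as $\sigma\to0$, whereas $\pi_\sigma(x,y)=O(\sigma^2)$ (two jumps are needed) and $\pi_\sigma(\Delta_i(x),y)$ is typically $0$, since $\Delta_i(\Delta_j(x))\neq\Delta_j(\Delta_i(x))$ is in general not reachable from $\Delta_i(x)$; hence no fixed constants can work for small $\sigma$. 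The special form $g=P_r\Gamma(f,f)$ might conceivably rescue such a bound (the carr\'e du champ also charges the $\Delta_i$-preimages), but establishing that requires exactly the quantitative small-time control of the ratios $\pi_t^2(\Delta_i(x),y)/\pi_t(x,y)$ that the paper develops in Lemma \ref{prop:ctrlsumratio} — explicit one-jump probabilities, the splitting time $t_0$, and a separate treatment of the atom $y=\Delta_i(x)$, which is precisely what generates the $\Gamma(f,f)(\Delta_i(\cdot))$ term. Your proposal defers this to an unproved claim supported by an invalid heuristic.

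There is a second, independent problem of bookkeeping: even if your coupling inequality were granted, you would not obtain the theorem as stated. The off-diagonal term enters multiplied by $u=t-\sigma$ (from Cauchy--Schwarz on the Duhamel integral), and after applying $P_\sigma$, your claimed bound, and integrating in $\sigma\in(0,t)$, the $\Delta_i(x)$-contribution acquires a coefficient of order $t^2$; thus your $\beta$ would be a second-order polynomial in $t$ rather than a constant, and Corollary \ref{corol2}, which needs the first term to dominate for large $t$, would not follow. In the paper $\beta$ is constant precisely because the $\Gamma(f,f)(\Delta_i(\cdot))$ term is produced only on the fixed small-time window $(0,t_0)$ in Lemma \ref{techn}, not on the whole interval $(0,t-s)$. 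To repair your argument you would need both (i) to confine the term involving $\Delta_i$ to a bounded time window, and (ii) to replace the coupling claim by a change-of-measure estimate of the type of Lemma \ref{prop:ctrlsumratio}.
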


 One notices that  since the coefficient  $\alpha(t)$  is a polynomial of first order and  $\beta$ is a   constant,  the first term dominates over the second for big time $t$, as shown in the next corollary. 
\begin{cor} \label{corol2}Assume the PJMP as described in (\ref{eq:generator1})-(\ref{phi2}). Then, for every $x\in D$,  and $t$ sufficiently large, i.e. $t>\zeta(f)$ 
\begin{align*} Var_{ E^x}( f (x_t)) \leq  2\alpha(t )\int_0^{t}  P_{w} \Gamma((f,f))(x) dw   \end{align*}
where $\zeta(f)$ is a constant depending only on $f,$
\[
\zeta(f)=\left(\frac{6t_0\sum_{i=1}^nP_t \Gamma(f,f)(\Delta_i ( x))  }{ (1+C_{1}) \int_0^tP_s \Gamma(f,f)( x)ds} \right)^\frac{1}{2}
\] for some positive constants $t_0, \, C_1$ and $M$.
\end{cor}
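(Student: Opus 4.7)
The plan is to bootstrap Theorem \ref{theorem2}: its bound has the shape
\[
Var_{E^x}(f(x_t)) \leq \alpha(t)A(t,x) + \beta B(t,x),
\]
where $A(t,x):=\int_0^tP_s\Gamma(f,f)(x)ds$ and $B(t,x):=\sum_{i=1}^n\int_0^tP_s\Gamma(f,f)(\Delta_i(x))ds$. Since $\alpha(t)$ grows polynomially in $t$ while $\beta$ is only a constant, for large $t$ the first term should swallow the second. The corollary amounts to finding an explicit threshold $\zeta(f)$ such that $\beta B(t,x)\leq \alpha(t)A(t,x)$ whenever $t>\zeta(f)$, so that the total is at most $2\alpha(t)A(t,x)$.

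To estimate $B(t,x)$ I would split the time integral at the fixed constant $t_0$ appearing in the preceding estimates of the paper, writing $\int_0^t = \int_0^{t_0}+\int_{t_0}^t$. On the long piece $[t_0,t]$, a comparison estimate of the form $P_s\Gamma(f,f)(\Delta_i(x))\leq (1+C_1)P_s\Gamma(f,f)(x)$ is needed and should be available because the lower bound \eqref{phi2} forces, with controlled probability, both trajectories started at $x$ and at $\Delta_i(x)$ to have spiked (and hence to have entered $\hat D$) by time $t_0$; after this coupling time the two semigroups differ by at most the multiplicative constant $1+C_1$. On the short piece $[0,t_0]$ the integrand is bounded on the compact $D$, and the length-$t_0$ contribution is converted into a multiple of $P_t\Gamma(f,f)(\Delta_i(x))$ by a uniform estimate $P_s\Gamma(f,f)(y)\leq M P_t\Gamma(f,f)(y)$ valid for $s\leq t_0$.

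Collecting these two pieces,
\[
\beta B(t,x)\leq \beta(1+C_1)A(t,x)+\beta t_0 M\sum_{i=1}^nP_t\Gamma(f,f)(\Delta_i(x)).
\]
The first summand is absorbed into $\tfrac12\alpha(t)A(t,x)$ as soon as $\alpha(t)\geq 2\beta(1+C_1)$, which holds for all large $t$ because $\alpha$ is polynomial. The second summand is absorbed into $\tfrac12\alpha(t)A(t,x)$ exactly when
\[
t^{2}(1+C_1)A(t,x)\geq 6 t_0\sum_{i=1}^nP_t\Gamma(f,f)(\Delta_i(x)),
\]
which, using that $\alpha(t)$ is of order $t^{2}$, rearranges into precisely $t>\zeta(f)$ with $\zeta(f)$ as stated.

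The main obstacle is the long-time comparison $P_s\Gamma(f,f)(\Delta_i(x))\leq (1+C_1)P_s\Gamma(f,f)(x)$. The translation invariance used in \cite{W-Y} and \cite{A-L} for Poisson-driven processes is not available here, so one must produce the inequality through an explicit coupling of the two chains started from $x$ and $\Delta_i(x)$, driven by the strictly positive spiking rate \eqref{phi2}. The coupling has to succeed on a time scale at most $t_0$ with sufficient control over the failure event to yield the multiplicative constant $1+C_1$ uniformly in $s\geq t_0$; the remaining book-keeping (choice of $M$, checking that $\alpha(t)\geq 2\beta(1+C_1)$ for $t>\zeta(f)$) is then routine.
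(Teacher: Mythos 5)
Your overall strategy---absorb the $\beta$-term of Theorem \ref{theorem2} into the $\alpha(t)$-term once $t$ is large---is the right one, and it is essentially all that is needed: since $\alpha(t)=2+16M^2(C_1+1)t_0t+4M^2(1+C_1)t^2$ grows like $t^2$ while $\beta=32t_0^2M^2$ is a constant, the corollary is meant to follow directly from Theorem \ref{theorem2} by requiring $t$ to exceed the threshold at which $\beta\sum_{i}\int_0^tP_s\Gamma(f,f)(\Delta_i(x))\,ds\leq \alpha(t)\int_0^tP_s\Gamma(f,f)(x)\,ds$, which is exactly what $\zeta(f)$ encodes; no comparison between the laws started from $x$ and from $\Delta_i(x)$ is required at this stage. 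Your execution, however, hinges on two intermediate inequalities that are neither available in the paper nor proved by you. First, the long-time comparison $P_s\Gamma(f,f)(\Delta_i(x))\leq(1+C_1)P_s\Gamma(f,f)(x)$ for $s\geq t_0$ does not follow from Lemma \ref{prop:ctrlsumratio}: that lemma bounds $\sum_y\pi_s(\Delta_i(x),y)^2/\pi_s(x,y)$, and via Cauchy--Schwarz this only gives $P_sg(\Delta_i(x))\leq\left(C_1\,P_sg^2(x)\right)^{1/2}$ for $g=\Gamma(f,f)\geq 0$, which is not a linear domination of $P_sg(\Delta_i(x))$ by $P_sg(x)$; you acknowledge this yourself, defer it to an unspecified coupling, and thus leave the crux of your argument open. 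Second, the short-time step $P_s\Gamma(f,f)(y)\leq M\,P_t\Gamma(f,f)(y)$ for $s\leq t_0$, with a constant independent of $f$ and $t$, is false in general: if $\Gamma(f,f)$ vanishes on the recurrent set $\hat D$ but not at some transient configuration reachable from $y$ at small times, then $P_t\Gamma(f,f)(y)\to 0$ as $t\to\infty$ while $P_s\Gamma(f,f)(y)$ remains bounded away from zero for small $s$, so no such uniform constant exists.

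The detour that created these gaps comes from trying to reproduce literally the quantity $\sum_iP_t\Gamma(f,f)(\Delta_i(x))$ appearing in the stated $\zeta(f)$. The intended reading is simpler: with $A(t,x)=\int_0^tP_s\Gamma(f,f)(x)\,ds$ and $B(t,x)=\sum_i\int_0^tP_s\Gamma(f,f)(\Delta_i(x))\,ds$, the bound $\alpha(t)\geq 4M^2(1+C_1)t^2$ shows that $\beta B(t,x)\leq\alpha(t)A(t,x)$ as soon as $t^2\geq 8t_0^2B(t,x)/\bigl((1+C_1)A(t,x)\bigr)$, giving $Var_{E^x}(f(x_t))\leq 2\alpha(t)A(t,x)$ above that threshold; the displayed $\zeta(f)$, with $P_t\Gamma(f,f)(\Delta_i(x))$ in place of its time integral and the factor $6t_0$, should be understood up to this kind of bookkeeping (note also that your own chain of estimates would produce $16t_0^3M$ rather than $6t_0$, a discrepancy you wave away). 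If you remove the splitting of $B(t,x)$ at $t_0$ and argue directly from the two terms of Theorem \ref{theorem2}, the proof closes without any coupling or new semigroup comparison.
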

 One should notice that although the lower value $\zeta(f)$      depends on the   function $f$,   the coefficient $2\alpha (t)$  of the inequality     does not depend on the   function $f$.

  The proof of the  Poincar\'e inequality for the general initial configuration is  presented in  section \ref{local}. 
  
  In the special case where $x$ is on the domain of the invariant measure we obtain the stronger inequality
   \begin{theorem} \label{theorem2b}Assume the PJMP as described in (\ref{eq:generator1})-(\ref{phi2}). Then, there exists a $t_1>0$ such that, for every $t>t_1$ and for every $x\in \hat D$, the following Poincar\'e type inequality holds.
\begin{align*}Var_{ E^x}( f (x_t)) \leq &   \gamma(t)P_t\Gamma(f,f)(x)  +2 \int _0^tP_s\Gamma(f,f)(x)ds.
\end{align*}
with $\gamma(t)$   a third   order polynomial of the time $t$  that do not depend on the function $f$. 
\end{theorem}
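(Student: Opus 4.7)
The approach is to build on Theorem \ref{theorem2} and convert the sum $\sum_i\int_0^t P_s\Gamma(f,f)(\Delta_i(x))\,ds$ into an expression involving only $x$. This should be possible precisely because $x\in\hat D$: the trajectory starting from $x$ visits each jump-neighbour $\Delta_i(x)$ after a spike of rate at least $\delta>0$, so the semigroup at $x$ already encodes contributions from the $\Delta_i(x)$.

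The principal technical tool is the first-jump-time decomposition of the semigroup. Writing $\Phi(x)=\sum_j\phi(x^j)$, conditioning on the time of the first spike yields, for any $h\geq 0$,
\begin{equation*}
P_t h(x)=e^{-t\Phi(x)}h(x)+\int_0^t e^{-u\Phi(x)}\sum_j\phi(x^j)\,P_{t-u}h(\Delta_j(x))\,du.
\end{equation*}
Retaining only the summand $j=i$, restricting $u$ to $[0,t_0]$ for a fixed $t_0$, and using $\phi(x^i)\geq\delta$ from \eqref{phi2} together with the boundedness of $\Phi$ on the compact $D$ (say $\Phi\leq M$), one obtains a local Harnack-type estimate
\begin{equation*}
\int_{t-t_0}^{t}P_s h(\Delta_i(x))\,ds\leq\frac{e^{t_0 M}}{\delta}\,P_t h(x).
\end{equation*}

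Applied to $h=\Gamma(f,f)$, partitioning $[0,t]$ into $\lceil t/t_0\rceil$ intervals of length $t_0$ and translating the above inequality in time produces
\begin{equation*}
\int_0^t P_s\Gamma(f,f)(\Delta_i(x))\,ds\leq C_1\sum_{k=1}^{\lceil t/t_0\rceil}P_{k t_0}\Gamma(f,f)(x).
\end{equation*}
Converting the Riemann sum on the right into an integral by invoking the Lipschitz regularity of $s\mapsto P_s\Gamma(f,f)(x)$ delivers a bound of the form $\tilde C_1(t)\,P_t\Gamma(f,f)(x)+\tilde C_2\int_0^t P_s\Gamma(f,f)(x)\,ds$ with $\tilde C_1(t)$ at most linear in $t$. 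Substituting back into Theorem \ref{theorem2}, the quadratic coefficient $\alpha(t)$ multiplies the integral term and gets absorbed into a cubic polynomial $\gamma(t)$ in front of $P_t\Gamma(f,f)(x)$; choosing $t_1$ large enough allows the integral coefficient to be reduced to $2$ since the lower-order cross-terms are then dominated by the leading cubic in $\gamma(t)$.

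The delicate point is the Riemann-sum-to-integral conversion: the Lipschitz modulus of $s\mapsto P_s\Gamma(f,f)(x)$ a priori involves $\Ge\Gamma(f,f)$ and hence depends on $f$, whereas the final coefficient $\gamma(t)$ must not. Resolving this requires either exploiting the finiteness of $\hat D$ (on which the semigroup reduces to a finite irreducible Markov chain, yielding $f$-independent spectral estimates relating the discrete values $P_{kt_0}\Gamma(f,f)(x)$ to $\int_0^t P_s\Gamma(f,f)(x)\,ds$), or performing a finer telescoping of adjacent windows that cancels the offending gradient terms pairwise. The remainder of the argument is then a matter of collecting the polynomial degrees to verify that $\gamma(t)$ has order at most three and that $t_1$ can be chosen independently of $f$.
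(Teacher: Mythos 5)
Your first-jump decomposition and the resulting Harnack-type estimate $\int_{t-t_0}^{t}P_s h(\Delta_i(x))\,ds\leq \frac{e^{t_0M}}{\delta}P_t h(x)$ for $h\geq 0$ are correct, and the windowing of $[0,t]$ is a legitimate (and genuinely different) route: the paper does not pass through Theorem \ref{theorem2} at all, but redoes the semigroup/Dynkin argument and bounds the Dynkin term directly by $4\theta^2t^2MP_t\Gamma(f,f)(x)$ (Lemma \ref{N1techn}), using the uniform lower bound $\pi_t(x,y)\geq 1/\theta$ for $x\in\hat D$, $t\geq t_1$ (Lemma \ref{boundProb}), which is where both the restriction to $\hat D$ and the threshold $t_1$ come from.

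However, your proposal has a genuine gap precisely at its central step, and you only gesture at how to close it. Two conversions are left unproved: (i) turning $\sum_{k}P_{kt_0}\Gamma(f,f)(x)$ into $\tilde C_1(t)P_t\Gamma(f,f)(x)+\tilde C_2\int_0^tP_s\Gamma(f,f)(x)\,ds$ with constants not depending on $f$ (your own "delicate point"), and (ii) the "absorption" of $\alpha(t)\int_0^tP_s\Gamma(f,f)(x)\,ds$ into $\gamma(t)P_t\Gamma(f,f)(x)$, without which the coefficient of the integral term is a second-order polynomial rather than the constant $2$ claimed in the statement. Neither the "spectral estimates" nor the "finer telescoping" you invoke is carried out, and (ii) is not a cosmetic rearrangement: it requires an inequality of the type $\int_0^tP_s\Gamma(f,f)(x)\,ds\leq \theta\, t\,P_t\Gamma(f,f)(x)$, which is false for general $x\in D$ and is exactly what the paper's Lemma \ref{boundProb} provides for $x\in\hat D$ and $t\geq t_1$. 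A telling symptom is that your argument as written never uses $x\in\hat D$ nor any threshold $t_1$ (the Harnack step holds for every $x\in D$), so if it were complete it would prove the stronger statement on all of $D$, which the distinction drawn in the paper (transient initial configurations are never revisited) rules out as the mechanism of proof. For the record, point (i) can be fixed $f$-independently by the one-sided bound $\Ge h\geq -\overline{\phi}\,h\geq -Mh$ for $h\geq 0$, which gives $P_{u}h(x)\leq \frac{e^{Mt_0}}{t_0}\int_{u}^{u+t_0}P_sh(x)\,ds$ for $u+t_0\leq t$; but point (ii) forces you to import the paper's lower-bound lemma on $\hat D$ anyway, so the missing ingredient is essential, not technical.
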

As  in the general case, for $t$ large enough we have the following corollary.
\begin{cor}Assume the PJMP as described in (\ref{eq:generator1})-(\ref{phi2}). Then,  there exists a $t_1>0$ such that,   for every for every $x\in \hat D$,  and $t$ sufficiently large, i.e. $t>\max \{\xi(f), t_1 \}$ 
\begin{align*} Var_{ E^x}( f (x_t)) \leq  2 \gamma(t)P_t\Gamma(f,f)(x)   \end{align*}
where $\xi(f)$ is a constant depending only on $f,$
\[
\xi(f)=  \left(\frac{\int _0^tP_s\Gamma(f,f)(x)ds  }{4\theta^2M^2N P_t\Gamma(f,f)(x) } \right)^\frac{1}{3}
\] for some positive constants $\theta$ and $M$.
\end{cor}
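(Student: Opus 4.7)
The statement to prove is a direct consequence of Theorem \ref{theorem2b}, and my plan is to use it as a black box and simply compare the two terms that appear in its right-hand side.

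First, for any $t>t_1$ and any $x\in\hat D$, Theorem \ref{theorem2b} gives
\begin{align*}
Var_{E^x}(f(x_t))\leq \gamma(t)\,P_t\Gamma(f,f)(x)+2\int_0^t P_s\Gamma(f,f)(x)\,ds.
\end{align*}
Thus it suffices to show that, under the additional assumption $t>\xi(f)$, the integral term is bounded by $\gamma(t)P_t\Gamma(f,f)(x)$, because then both summands are at most $\gamma(t)P_t\Gamma(f,f)(x)$ and the claim $Var_{E^x}(f(x_t))\leq 2\gamma(t)P_t\Gamma(f,f)(x)$ follows immediately.

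Second, I would rewrite the condition $t>\xi(f)$ by cubing both sides of the definition of $\xi(f)$, which yields
\begin{align*}
4\theta^2 M^2 N\, t^3\, P_t\Gamma(f,f)(x)\;>\;\int_0^t P_s\Gamma(f,f)(x)\,ds,
\end{align*}
and hence $2\int_0^t P_s\Gamma(f,f)(x)\,ds<8\theta^2 M^2 N\,t^3 P_t\Gamma(f,f)(x)$. Here $\theta$, $M$ and $N$ are the same constants that enter the construction of $\gamma(t)$ in the proof of Theorem \ref{theorem2b}; since $\gamma(t)$ is a third order polynomial in $t$ whose leading coefficient is (a positive multiple of) $\theta^2 M^2 N$, one can enlarge $t_1$ if needed so that for every $t>t_1$ one has $\gamma(t)\geq 8\theta^2 M^2 N\, t^3$. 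Combining these two inequalities gives
\begin{align*}
2\int_0^t P_s\Gamma(f,f)(x)\,ds\;\leq\;\gamma(t)\,P_t\Gamma(f,f)(x),
\end{align*}
which is the desired comparison.

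There is no real obstacle: the only thing to keep track of is the leading coefficient of the polynomial $\gamma(t)$ produced in Theorem \ref{theorem2b}, which must be matched against the constant $4\theta^2 M^2 N$ appearing in the definition of $\xi(f)$. Once the threshold $t_1$ is chosen large enough that $\gamma(t)$ dominates $8\theta^2 M^2 N\,t^3$ (which is possible precisely because $\gamma$ is third order with leading coefficient of the right form), the argument is just a two-line substitution into the inequality of Theorem \ref{theorem2b}.
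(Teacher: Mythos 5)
Your proposal is correct and is exactly the (implicit) argument the paper intends: the corollary is stated without a separate proof, and it follows from Theorem \ref{theorem2b} by cubing the condition $t>\xi(f)$ to get $2\int_0^t P_s\Gamma(f,f)(x)\,ds\leq 8\theta^2M^2Nt^3\,P_t\Gamma(f,f)(x)=\gamma(t)P_t\Gamma(f,f)(x)$, since the proof of Theorem \ref{theorem2b} yields $\gamma(t)=8\theta^2M^2Nt^3$ exactly. Your only inaccuracy is the hedge that one could ``enlarge $t_1$'' to force $\gamma(t)\geq 8\theta^2M^2Nt^3$ --- that would fail if the leading coefficient were smaller --- but it is harmless here because the coefficient is precisely $8\theta^2M^2N$, so no enlargement is needed.
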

    We conclude this section with the Poincar\'e ineqality for the invariant measure $\pi$ presented on the next theorem.
\begin{theorem} \label{InvPoin} Assume the PJMP as described in (\ref{eq:generator1})-(\ref{phi2}). Then  $\pi$ satisfies a Poincar\'e inequality 
  \[\pi\left(f- \pi f \right)^2\leq C_0 \pi(\Gamma(f,f))\]
  for some constant $C_0>0$.
  \end{theorem}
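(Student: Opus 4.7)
The plan is to upgrade the time-dependent semigroup inequality of Theorem~\ref{theorem2b} into a time-independent bound on $\pi$, by combining it with the ergodicity of $P_t$ on the finite support $\hat D$ of the invariant measure. The bridge between the two is the classical variance decomposition
\begin{equation*}
Var_\pi(f) \;=\; \pi\bigl(Var_{P_t}(f)\bigr) \;+\; Var_\pi(P_t f),
\end{equation*}
which follows directly from $\pi P_t=\pi$ by expanding the squares on both sides.

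First I would fix any $t>t_1$ and integrate the pointwise bound of Theorem~\ref{theorem2b} against $\pi$. Since $\pi$ is supported on $\hat D$, the theorem applies $\pi$-a.s., and the invariance $\pi(P_s g)=\pi(g)$ collapses the right-hand side to a constant multiple of $\pi(\Gamma(f,f))$:
\begin{equation*}
\pi\bigl(Var_{P_t}(f)\bigr) \;\leq\; \gamma(t)\,\pi\bigl(\Gamma(f,f)\bigr) \;+\; 2\int_0^t \pi\bigl(\Gamma(f,f)\bigr)\,ds \;=\; (\gamma(t)+2t)\,\pi\bigl(\Gamma(f,f)\bigr).
\end{equation*}
Combined with the decomposition, this already gives
\begin{equation*}
Var_\pi(f) \;\leq\; (\gamma(t)+2t)\,\pi\bigl(\Gamma(f,f)\bigr) \;+\; Var_\pi(P_t f).
\end{equation*}

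To absorb the remaining term I would exploit that $\hat D$ is finite, as noted in the introduction, and that $\phi\geq\delta>0$ forces every neuron to fire with strictly positive probability in any fixed time interval. Restricted to its invariant component $\hat D$ the process is therefore a finite-state continuous-time Markov chain with transition density $p_{t^\ast}(x,y)$ bounded below by a strictly positive constant, for some $t^\ast>t_1$. A standard Doeblin/coupling argument then yields a uniform $\eta\in(0,1)$ such that
\begin{equation*}
Var_\pi(P_{t^\ast}f)\;\leq\;\eta\,Var_\pi(f)
\end{equation*}
for every $f$. Plugging this at $t=t^\ast$ and rearranging,
\begin{equation*}
(1-\eta)\,Var_\pi(f)\;\leq\;(\gamma(t^\ast)+2t^\ast)\,\pi\bigl(\Gamma(f,f)\bigr),
\end{equation*}
which is the sought inequality with $C_0=(\gamma(t^\ast)+2t^\ast)/(1-\eta)$.

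The main obstacle I expect is the contraction step. It must be phrased purely in terms of coupling or strictly positive transition densities on the finite set $\hat D$, so as not to implicitly reuse the spectral gap one is trying to prove. In particular, one has to identify the relevant irreducible class inside $\hat D$ on which $\pi$ lives (after every neuron has spiked at least once, only a finite collection of sums of synaptic weights is reachable) and verify that $p_{t^\ast}(x,y)>0$ uniformly there. Everything else — the variance decomposition and the integration of Theorem~\ref{theorem2b} against $\pi$ — is essentially bookkeeping that is forced on us by $\pi P_t=\pi$.
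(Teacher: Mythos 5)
Your argument is correct in substance, but it follows a genuinely different route from the paper. The paper does not pass through the semigroup inequality at all: it proves Theorem~\ref{InvPoin} directly by the finite-Markov-chain path method of \cite{SC}, writing $Var_\pi(f)=\frac12\iint (f(x)-f(y))^2\,\pi(dx)\pi(dy)$ on the finite set $\hat D$, telescoping $(f(x)-f(y))^2$ along a shortest sequence of spikes joining $x$ to $y$, and then inserting the intensities via $\phi\geq\delta$ to reconstruct the carr\'e du champ; this yields the explicit constant $C_0=\frac{N^2}{2\,\delta\,\min_{x\in\hat D}\pi(x)}$ and needs nothing beyond finiteness of $\hat D$ and positivity of $\pi$ on it. Your route instead combines the decomposition $Var_\pi(f)=\pi(Var_{P_t}(f))+Var_\pi(P_tf)$, the $\pi$-integrated form of Theorem~\ref{theorem2b}, and an $L^2(\pi)$-contraction of $P_{t^\ast}$ obtained from a Doeblin minorization on $\hat D$; all three steps are sound (the minorization $p_{t^\ast}(x,y)\geq\epsilon\,\pi(y)$ gives $P_{t^\ast}-\epsilon\,\pi=(1-\epsilon)Q$ with $Q$ a $\pi$-invariant Markov kernel, hence $Var_\pi(P_{t^\ast}f)\leq(1-\epsilon)^2Var_\pi(f)$, with no circularity). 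What each approach buys: yours reuses the machinery of Section~3 and produces a constant tied to the ergodicity constant $\theta$ and the polynomial $\gamma(t^\ast)$, whereas the paper's is more elementary, independent of Theorems~\ref{theorem2}--\ref{theorem2b}, and gives a cleaner explicit constant. The one point you should make explicit is the irreducibility of the restricted chain on $\hat D$ (so that $p_{t^\ast}(x,y)>0$ uniformly for all $x,y\in\hat D$, not just $y\in D_x$); note, however, that the paper's own Lemma~\ref{boundProb} and its shortest-path construction between arbitrary $x,y\in\hat D$ rest on exactly the same implicit assumption, so your proof is on the same footing as the paper's in this respect.
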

 \section{proof of the  Poincar\'e for general initial configurations. \label{local}}

In this section we focus on   neurons that start with  values on any possible initial configuration $x\in D$  as described by (\ref{eq:generator1})-(\ref{phi2}),  and we prove   the  local Poincar\'e inequalities presented  in Theorem \ref{theorem2} and Corollary \ref{corol2}. Let us first state some technical results.
 
\subsection{Technical results}

We start by  showing properties of  the jump probabilities of the degenerate PJMP processes. Since the process is constant between jumps, the set of reachable positions $y$ after a given time $t$ for a trajectory starting from $x$ is discrete. We  therefore define  
$$\pi_t(x,y):=P_x(X_t=y) \mbox{ and } D_x:=\{y \in D, \pi_t(x,y) > 0 \}.$$ 

This set is finite for the following reasons. On one hand, for each neuron $i \in I,$ the set $S_i= \{0\} \cup \{\sum_{k=1}^n W_{j_k \to i}, n \in \N^*, j_k \in I \}$ is discrete and such that the intersection with any compact is finite. On the other hand, we have $D_x \subset \left[ \prod_{i \in I}  \left(S_i \cup (x_i+S_i) \right) \right] \cap D .$

The idea is that since the process is constant between jumps, elements of $D_x$ are such that there exists a sequence of jumps leading from $x$ to $y.$ Since we are only interested on the arrival position $y,$ among all jump sequences leading to $y,$ we can consider only sequences with minimal number of jumps and the number of such jump sequences leading to positions inside a compact is finite, due to the fact that each $W_{j \to i}$ is non-negative.

Since $x$ is also in the compact $D,$ we can have an upper bound for the cardinal of $D_x$ independent from $x.$ 

For a given time $s \in \R_+$ and a given position $x \in D,$ we denote by $p_s(x)$ the probability that starting at time $0$ from position $x,$ the process has no jump in the interval $[0,s],$ and for a given neuron $i \in I$ by $p_s^i(x)$ the probability that the process has exactly one jump of neuron $i$ and no jumps for other neurons. 

Introducing the notation $\overline{\phi}(x)=\sum_{j \in I} \phi(x_j)$ and given the dynamics of the model, we have that 
$$
p_s(x)= e^{-s \overline{\phi}(x)}
$$
and
\begin{multline}\label{computePsix}
p_s^i(x)= \int_0^s \phi(x_i) e^{-u\overline{\phi}(x)}e^{-(s-u)\overline{\phi}(\Delta^i(x))}du 
\\ = \left\{
\begin{array}{ll}
\frac{\phi(x_i)}{\overline{\phi}(x) - \overline{\phi}(\Delta^i(x))} \left( e^{-s\overline{\phi}(\Delta^i(x))} - e^{-s\overline{\phi}(x) } \right) & \, \mbox{if } \,  \overline{\phi}(\Delta^i(x)) \neq \overline{\phi}(x) \\
s \phi(x_i) e^{-s\overline{\phi}(x) } & \, \mbox{if } \, \overline{\phi}(\Delta^i(x)) = \overline{\phi}(x) 
\end{array}
\right\} .
\end{multline}

Define 
\begin{equation}\label{def:sm}
t_0=  \left\{
\begin{array}{ll}
\frac{ln\left( \overline{\phi}(x) \right) -ln \left( \overline{\phi}(\Delta^i(x)) \right)}{\overline{\phi}(x) - \overline{\phi}(\Delta^i(x))} & \, \mbox{if } \,  \overline{\phi}(\Delta^i(x)) \neq \overline{\phi}(x) \\
\frac{1}{\overline{\phi}(x)} & \, \mbox{if } \, \overline{\phi}(\Delta^i(x)) = \overline{\phi}(x) 
\end{array}
\right\}.
\end{equation}

As a function of $s, \, p_s^i(x)$ is continuous, strictly increasing on $(0, t_0)$ and strictly decreasing on $(t_0,+\infty)$ and we have $p_0^i(x)=0.$

 \begin{Lemma}\label{prop:ctrlsumratio} 
 Assume the PJMP as described in (\ref{eq:generator1})-(\ref{phi2}).There exists positive constants $C_1$ and $C_2$ independent of $t, \, x$ and $y$ such that 
 \begin{itemize}
 \item For all $t> t_0,$ we have 
 $$\sum_{y \in D_x} \frac{\pi_t^2(\Delta^i(x),y)}{\pi_t(x,y)} \leq C_1.$$
 
 \item For all $t \leq  t_0,$ we have
  $$\sum_{y \in D_x\setminus \{\Delta^i(x)\} } \frac{\pi_t^2(\Delta^i(x),y)}{\pi_t(x,y)} \leq C_1$$
and
$$
\frac{\pi_t^2(\Delta^i(x),\Delta^i(x))}{\pi_t(x,\Delta^i(x))} \leq \frac{\pi_t(\Delta^i(x),\Delta^i(x))}{\pi_t(x,\Delta^i(x))} \leq \frac{C_2}{t}.
$$
 
 \end{itemize}

 \end{Lemma}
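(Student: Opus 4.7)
The proof rests on a single path inequality: if we restrict to trajectories from $x$ that execute exactly one spike (of neuron $i$) during an initial window $[0,s]$ and then evolve freely from $\Delta^i(x)$, the Markov property gives
\[
\pi_t(x, y) \geq p_s^i(x) \, \pi_{t - s}(\Delta^i(x), y), \qquad s \in [0, t], \ y \in D_x.
\]
This is the only nontrivial ingredient; the rest is bookkeeping on which $s$ to pick and a case analysis on $t$ versus $t_0$.

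I would first dispose of the diagonal case. For $t \leq t_0$ and $y = \Delta^i(x)$, setting $s = t$ above gives $\pi_t(x, \Delta^i(x)) \geq p_t^i(x)$. The explicit formula (\ref{computePsix}), the monotonicity of $s \mapsto p_s^i(x)$ on $(0, t_0)$, the lower bound $\phi \geq \delta$, and compactness of $D$ together imply $p_t^i(x) \geq c\, t$ for a uniform constant $c > 0$ (both cases of (\ref{computePsix}) reduce to this by a Taylor expansion around $s=0$). Combined with $\pi_t(\Delta^i(x), \Delta^i(x)) \leq 1$, this produces the $C_2/t$ bound.

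For the main sums, the strategy is to establish $\pi_t(\Delta^i(x), y) / \pi_t(x, y) \leq C_1$ uniformly for the relevant $(t, y)$, since then
\[
\sum_y \frac{\pi_t(\Delta^i(x), y)^2}{\pi_t(x, y)} \;\leq\; \Bigl( \max_y \frac{\pi_t(\Delta^i(x), y)}{\pi_t(x, y)} \Bigr) \sum_y \pi_t(\Delta^i(x), y) \;\leq\; C_1.
\]
For $t > t_0$, apply the path inequality at $s = t_0$ (the maximizer of $p_\cdot^i(x)$), so $\pi_t(x, y) \geq p_{t_0}^i(x) \, \pi_{t - t_0}(\Delta^i(x), y)$, with $p_{t_0}^i(x)$ uniformly bounded below on $D$ by $\phi \geq \delta$. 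The remaining ratio $\pi_t(\Delta^i(x), y) / \pi_{t - t_0}(\Delta^i(x), y)$ is controlled by exponential convergence of the finite chain on $D_x$ to its equilibrium $\pi$, patched with a compactness argument on $t \in [t_0, T]$. For $t \leq t_0$ and $y \neq \Delta^i(x)$, compare the short-time expansions $\pi_t(z, y) \asymp C(z, y) \, t^{d(z, y)}$, where $d(z, y)$ denotes the minimum number of jumps from $z$ to $y$: the triangle-type bound $d(x, y) \leq d(\Delta^i(x), y) + 1$ together with $d(\Delta^i(x), y) \geq 1$ (since $y \neq \Delta^i(x)$) yields $2 d(\Delta^i(x), y) - d(x, y) \geq 0$, so each summand $\pi_t^2(\Delta^i(x), y) / \pi_t(x, y)$ stays bounded as $t \downarrow 0$. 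The uniform bound on $|D_x|$ over $x \in D$ then closes the sum.

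The central obstacle is the uniformity of all constants in $x$, $y$ and $t$: the implicit prefactors in the short-time expansions must be bounded uniformly on the compact $D$, and the ergodic control in the $t > t_0$ regime must be stitched continuously to the behaviour just above $t_0$. Compactness of $D$, continuity of $\phi$, and the uniform lower bound $\phi \geq \delta$ are the tools that make this uniformity possible.
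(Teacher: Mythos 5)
Your diagonal case ($t\le t_0$, $y=\Delta^i(x)$) and your reduction of the sums to a uniform bound on a single ratio are fine, and your basic path inequality $\pi_t(x,y)\ge p_s^i(x)\,\pi_{t-s}(\Delta^i(x),y)$ is exactly the lower bound the paper uses for the denominator. The genuine gap is in the regime $t>t_0$. After writing $\pi_t(x,y)\ge p_{t_0}^i(x)\,\pi_{t-t_0}(\Delta^i(x),y)$ you still need $\pi_t(\Delta^i(x),y)/\pi_{t-t_0}(\Delta^i(x),y)$ bounded uniformly in $t$, $x$, $y$, and you propose to get this from ``exponential convergence of the finite chain on $D_x$ to its equilibrium $\pi$''. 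That tool fails precisely on the states that matter here: the paper emphasizes that $D_x$ contains many configurations outside $\hat D$, the support of the invariant measure (e.g.\ any $y$ with a coordinate that is not a sum of synaptic weights), and for such transient $y$ one has $\pi(y)=0$ while both $\pi_t(\Delta^i(x),y)$ and $\pi_{t-t_0}(\Delta^i(x),y)$ tend to $0$; convergence to equilibrium says nothing about their ratio. Closing your argument would require a separate comparison of decay rates of transient-state probabilities (e.g.\ a spectral decomposition of the generator restricted to the transient classes), plus uniformity over the uncountable family of starting points $x\in D$ --- your compactness patch in $t\in[t_0,T]$ does not give uniformity in $x$, since $D_x$ and the associated chain change with $x$. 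The paper avoids all of this with a different device: it bounds the numerator by $\pi_t(\Delta^i(x),y)\le \pi_{t-s}(\Delta^i(x),y)p_s(y)+\bigl(1-e^{-sN\phi(m)}\bigr)$, and when $\pi_{t-t_0}(\Delta^i(x),y)<p^i_{t_0}(x)$ it chooses $s=s_*\in(0,t_0)$ by the intermediate value theorem so that $p^i_{s_*}(x)=\pi_{t-s_*}(\Delta^i(x),y)$; the whole estimate then reduces to the explicit uniform bound on $\bigl(1-e^{-sN\phi(m)}\bigr)/p_s^i(x)$ coming from \eqref{computePsix}, \eqref{ass:dphipos} and \eqref{phi2}. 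That balancing step is the key idea your proposal is missing.

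A secondary remark: for $t\le t_0$ and $y\ne\Delta^i(x)$ your exponent count $2d(\Delta^i(x),y)\ge d(x,y)$ is correct and could be made rigorous with explicit path estimates (a lower bound $\pi_t(x,y)\ge c\,t^{d(x,y)}$ and an upper bound $\pi_t(\Delta^i(x),y)\le C\,t^{d(\Delta^i(x),y)}$ with constants depending only on $\delta$, $N$, $\phi(m)$ and the uniform bound on path lengths), but as written the prefactors in your short-time expansions and their uniformity in $x$ are only asserted, not proved. The paper instead handles this case by the same $s_*$ device, using that $\pi_{t-s}(\Delta^i(x),y)$ vanishes at $s=t$ when $y\ne\Delta^i(x)$, so no expansion is needed.
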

 
 \begin{proof}
  As said before, the set $D_x$ is finite so it is sufficient to obtain an upper bound for the ratio $\frac{\pi_t^2(\Delta^i(x),y)}{\pi_t(x,y)}.$

We have for all $s \in (0,t)$

\begin{equation}
\frac{\pi_t^2(\Delta^i(x),y)}{\pi_t(x,y)} \leq \frac{\left(\pi_{t-s}(\Delta^i(x),y)p_s(y) + \sup_{z \in D} (1-p_s(z)) \right)^2}{p_s^i(x)\pi_{t-s}(\Delta^i(x),y)}.
\end{equation}

Here we decomposed the numerator according to two events. Either $X_{t-s}=y$ and there no jump in the interval of time $[t-s,t]$ or there is at least one jump in the interval of time $[t-s,t],$ whatever the position $z \in D$ of the process at time $t-s.$ 

From the previous inequality, we then obtain

\begin{equation}\label{forcedJump}
\frac{\pi_t^2(\Delta^i(x),y)}{\pi_t(x,y)} \leq \frac{\left(\pi_{t-s}(\Delta^i(x),y)p_s(y) +  (1- e^{-sN\phi(m)}) \right)^2}{p_s^i(x)\pi_{t-s}(\Delta^i(x),y)}.
\end{equation}

where we recall that the constant $m$ appears in the definition of the compact set $D$ introduced in \eqref{eq:defcompactD}.

Let us first assume that $t > t_0.$ Recall that $ t_0$ is defined in \eqref{def:sm}.

If $\pi_{t- t_0}(\Delta^i(x),y) \geq p_{ t_0}^i(x),$ we have 

\begin{equation}\label{cas1}
\frac{\pi_t^2(\Delta^i(x),y)}{\pi_t(x,y)} \leq  \frac{1}{\left( p_{t_0}^i(x) \right)^2 }.
\end{equation}

Assume now  that $\pi_{t- t_0}(\Delta^i(x),y) < p_{ t_0}^i(x)$  and let us recall that as a function of $s, \, p_s^i(x)$ is continuous, strictly increasing on $(0,  t_0)$ and $p_0^i(x)=0.$

On the other hand, as a function of $s, \, \pi_{t-s}(\Delta^i(x),y)$ is continuous and takes value $\pi_{t}(\Delta^i(x),y) > 0$ for $s=0.$

We deduce from this that there  exists $s_* \in (0, t_0)$ such that $p_{s_*}^i(x) = \pi_{t-s_*}(\Delta^i(x),y).$

Now \eqref{forcedJump} with $s= s_*$ gives us 

\begin{equation}\label{forcedJump2}
\frac{\pi_t^2(\Delta^i(x),y)}{\pi_t(x,y)} \leq (p_{s_*}(y))^2+2 p_{s_*}(y) \frac{1- e^{-s_*N\phi(m)}}{p_{s_*}^i(x)} + \left( \frac{1- e^{-s_*N\phi(m)}}{p_{s_*}^i(x)} \right)^2.
\end{equation}

For all $s \in (0, t_0), \, p_s(y) \leq 1,$ and we then study $\frac{1- e^{-sN\phi(m)}}{p_s^i(x)}$ as a function of $s \in (0,  t_0).$

Using the explicit value of $p_s^i(x)$ given in \eqref{computePsix} and assumption \eqref{ass:dphipos}, we obtain for all $s \in (0, t_0),$

$$
\frac{1- e^{-sN\phi(m)}}{p_s^i(x)} \leq \left\{
\begin{array}{ll}
\frac{e^{ t_0N\phi(m)}}{\delta} \, \frac{\left( \overline{\phi}(x) - \overline{\phi}(\Delta^i(x))\right) \left( 1-e^{-sN\phi(m)}\right)}{1-e^{-s\left( \overline{\phi}(x) - \overline{\phi}(\Delta^i(x))\right)}} & \, \mbox{if } \,  \overline{\phi}(\Delta^i(x)) \neq \overline{\phi}(x) \\ \\
\frac{e^{ t_0N\phi(m)}}{\delta} \frac{1-e^{-s}}{s} & \, \mbox{if } \, \overline{\phi}(\Delta^i(x)) = \overline{\phi}(x) 
\end{array}
\right\}. 
$$

Recall that $\delta > 0$ is defined in assumption (\ref{phi2}) and satisfies $\phi(x) \geq \delta$ for all $x \in \R_+.$ 

In both cases, when $s$ is far from zero, we can obtain an upper bound independent of $x,$ and when $s$ goes to zero, the limit of the right hand term is $\frac{N\phi(m)e^{ t_0N\phi(m)}}{\delta}.$

%

From this, we deduce that there exists a constant $M_D$ such that for all $s \in (0, t_0),$
$$\frac{1- e^{-sN\phi(m)}}{p_s^i(x)} \leq M_D.$$

Putting all together, we obtain the announced result for the case  where $t >  t_0.$

We now consider the case where $t \leq  t_0$. 

We start by considering the case where $y \neq \Delta^i(x)$ and go back to \eqref{forcedJump}.

As a function of $s, \, \pi_{t-s}(\Delta^i(x),y)$ is continuous and takes values $\pi_{t}(\Delta^i(x),y) > 0$ and $\pi_0(\Delta^i(x),y) = 0$ respectively for $s=0$ and $s=t.$ 

We deduce from this that there exists $s_* \in (0,t) \subset (0,t_0)$ such that $p_{s_*}^i(x) = \pi_{t-s_*}(\Delta^i(x),y)$ and we are back in the previous case so that the same result holds.

Let us assume now that $y=\Delta^i(x),$ we have

\begin{equation}\label{Deltaix1}
\frac{\pi_t^2(\Delta^i(x),y)}{\pi_t(x,y)} \leq \frac{\pi_t(\Delta^i(x),\Delta^i(x))}{\pi_t(x,\Delta^i(x))} \leq \frac{1}{p_t^i(x)}.
\end{equation}

Recall the explicit expression of $p_t^i(x)$ given in \eqref{computePsix} and use (\ref{phi2}) to bound the intensity function 

$$
p_t^i(x) =t \phi(x_i) e^{-t\overline{\phi}(x) }\geq t\delta e^{-t_0 \sup_{x\in D}\phi(x)}=Ct
$$

for some constant $C$ independent of $t$ and $x,$ which gives us the announced result.

 \end{proof}

Taking under account  the last   result, we can obtain the first technical bound needed in the proof of  the local Poincar\'e inequality.

\begin{Lemma}\label{techn1}  Assume the PJMP as described in (\ref{eq:generator1})-(\ref{phi2}). Then
 \begin{align*}\left(\int_{t_0}^{t-s}(\E^{\Delta_i ( x) }-\E^x) \sum_{ j = 1 }^N \phi (x_{u}^j)(f(\Delta_j ( x_{u}))-f(x_{u})) du\right)^2 \leq  \\    (t-s)(1+C_{1})M\int_{t_0}^{t-s} \E^x\Gamma((f,f))(x_{u}). 
 \end{align*}
\end{Lemma}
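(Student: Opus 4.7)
The plan is to view the inner sum as the generator applied to $f$, namely
$\sum_{j=1}^N \phi(x_u^j)(f(\Delta_j(x_u)) - f(x_u)) = \Ge f(x_u)$,
and exploit Lemma \ref{prop:ctrlsumratio} via a Cauchy--Schwarz argument against the weight $\sqrt{\pi_u(x,y)}$ on the set $D_x$ of reachable positions. First I would apply Cauchy--Schwarz to the outer integral to reduce the claim to a pointwise estimate of
$\left((\E^{\Delta_i(x)} - \E^x)\Ge f(x_u)\right)^2 \leq (1+C_1)\,\E^x[(\Ge f(x_u))^2]$,
integrated against $du$ on $(t_0, t-s)$.

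To establish the pointwise bound I rewrite
\[
(\E^{\Delta_i(x)} - \E^x)\Ge f(x_u) = \sum_{y \in D_x}\left(\frac{\pi_u(\Delta_i(x),y)}{\sqrt{\pi_u(x,y)}} - \sqrt{\pi_u(x,y)}\right)\sqrt{\pi_u(x,y)}\,\Ge f(y),
\]
so that Cauchy--Schwarz yields
\[
\bigl((\E^{\Delta_i(x)} - \E^x)\Ge f(x_u)\bigr)^2 \leq \Biggl(\sum_{y \in D_x}\frac{\pi_u(\Delta_i(x),y)^2}{\pi_u(x,y)} - 1\Biggr)\,\E^x\bigl[(\Ge f(x_u))^2\bigr],
\]
after expanding the square and using that $\pi_u(x,\cdot)$ and $\pi_u(\Delta_i(x),\cdot)$ are probability distributions. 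Since $u > t_0$, the first factor in parentheses is bounded by $C_1$ (hence by $1+C_1$) by the first bullet of Lemma \ref{prop:ctrlsumratio}.

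It remains to dominate $(\Ge f(y))^2$ by the carr\'e du champ. A second Cauchy--Schwarz inequality on the sum over $j$ gives
\[
(\Ge f(y))^2 = \Biggl(\sum_{j=1}^N \phi(y^j)\bigl(f(\Delta_j(y)) - f(y)\bigr)\Biggr)^2 \leq \Biggl(\sum_{j=1}^N \phi(y^j)\Biggr)\cdot 2\,\Gamma(f,f)(y),
\]
and since $y \in D$ is bounded by $m$ coordinatewise and $\phi$ is bounded on $D$, we obtain $(\Ge f(y))^2 \leq M\,\Gamma(f,f)(y)$ for $M := 2N\sup_{z \in D}\phi(z)$ (or $M = 2N\phi(m)$ if $\phi$ is increasing). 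Combining the two Cauchy--Schwarz applications with the outer one gives precisely
\[
\Biggl(\int_{t_0}^{t-s}(\E^{\Delta_i(x)}-\E^x)\Ge f(x_u)\,du\Biggr)^2 \leq (t-s)(1+C_1)M\int_{t_0}^{t-s}\E^x\bigl[\Gamma(f,f)(x_u)\bigr]\,du,
\]
which is the claimed bound.

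The main subtlety is purely bookkeeping: one must justify restricting the sum to the finite set $D_x$ (so that $\pi_u(x,y) > 0$ wherever $\pi_u(\Delta_i(x),y)$ is used in the denominator) and verify that for $u > t_0$ one may quote the first bullet of Lemma \ref{prop:ctrlsumratio} without having to separate the $y = \Delta_i(x)$ term as in the second bullet. The condition $u \geq t_0$ in the lower limit of integration is exactly what makes this work, which is why the integral starts at $t_0$ rather than at $0$.
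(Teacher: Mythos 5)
Your proof is correct and follows essentially the same route as the paper: Cauchy--Schwarz in the time integral, then Cauchy--Schwarz over $y$ weighted by $\pi_u(x,\cdot)$ so that Lemma \ref{prop:ctrlsumratio} controls the ratio factor by $1+C_1$ for $u>t_0$, and finally a weighted Cauchy--Schwarz on the $j$-sum to dominate $(\Ge f)^2$ by a constant times $\Gamma(f,f)$. The only differences are cosmetic bookkeeping: you keep the difference inside the quadratic form (getting $\sum_y\pi_u^2(\Delta_i(x),y)/\pi_u(x,y)-1$ rather than the paper's cruder bound $1+C_1$) and you carry the factor $2$ from $\Gamma=\tfrac12\sum_j\phi\,(\cdot)^2$ into your constant $M$, which the paper silently absorbs.
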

\begin{proof}
 Consider $\pi_t(x,y)$ to probability kernel of $\E^x$, i.e. $\E^x (f(x_t))=\sum_y \pi_t(x,y) f(y) $. Then we can write
 \begin{align*}\nonumber
 \nonumber \II_1:=&\left(\int_{t_0}^{t-s}(\E^{\Delta_i ( x) }-\E^x) \sum_{j = 1 }^N \phi (x_{u}^j)(f(\Delta_j( x_{u}))-f(x_{u})) du\right)^2=   \\   =& \left( \int_{t_0}^{t-s}\sum_y \pi_u(x,y)\left((\frac{\pi_u(\Delta_i (x),y)}{\pi_u(x,y)}-1) \sum_{j = 1 }^N \phi _u(y^j)(f(\Delta_j(y))-f(y))\right)du\right)^2. \nonumber\end{align*}
To continue we will use  Holder's inequality to pass the second power inside the first integral, which will give 
 \begin{align*}\nonumber
  & \II_1\leq \\ &
(t-s) \int_{t_0}^{t-s} \left(\sum_y \pi_u(x,y)\left((\frac{\pi_u(\Delta_i (x),y)}{\pi_u(x,y)}-1) \sum_{ j = 1 }^N \phi_u (y^j)(f(\Delta_j ( y))-f(y))\right)\right)^2 du\end{align*}
and then  we apply the Cauchy-Schwarz inequality for the measure $\E^x$ to get
 \begin{align*}
 \nonumber& \II_1 \leq  \\ &
(t-s)\int_{t_0}^{t-s}\E^x\left(\frac{\pi_u(\Delta_i (x) ,y)}{\pi_u(x,y)}-1\right)^2*\E^x\left( \underbrace{ \sum_{ j = 1 }^N \phi (y_u^j)(f(\Delta_j( y_u))-f(y_u))}_{:=\mathbb{S}}\right)^2  du.\end{align*}
   The first quantity involved in the above integral is bounded  from Lemma \ref{prop:ctrlsumratio}  by a constant  
   \[\E^x\left(\frac{\pi_u(\Delta_i (x) ,y)}{\pi_u(x,y)}-1\right)^2\leq 1+ E^x\left(\frac{\pi_u(\Delta_i (x) ,y)}{\pi_u(x,y)}\right)^2=1+\sum_{y \in D_x} \frac{\pi_u^2(\Delta^i(x),y)}{\pi_u(x,y)}\leq 1+C_1\]while for the sum involved in the second quantity, since $\phi(x)\geq \delta>0$ we can use Holder's inequality 
\begin{align*}\mathbb{S}&=\left(\sum_{ j = 1 }^N \phi (y_u^j)\right)^2\left(  \sum_{ j = 1 }^N \frac{\phi (y_u^j)}{\sum_{ j = 1 }^N \phi (y_u^j)}(f(\Delta_j( y_u))-f(y_u))\right)^2\leq \\ & \leq M \sum_{ j = 1 }^N \phi (y_u^j)(f(\Delta_j(y_u))-f(y_u))^2   =  M\Gamma (f,f)(x_u)\end{align*}where $M=\sup_{x\in D} \sum_{ i = 1 }^N \phi (x^i)$, so that
\begin{align*} \II_1     \leq   (t-s)(1+C_{1})M\int_{t_0}^{t-s} \E^x\Gamma((f,f))(x_{u}) du.\end{align*}
\end{proof}
We will now extend the last bound  to an integral on a time domain starting at $0.$ 
\begin{Lemma}\label{techn} For  the PJMP as described in (\ref{eq:generator1})-(\ref{phi2}), we have
 \begin{align*}\left(\int_0^{t-s}\left(\E^{\Delta_i ( x) }( \Ge f(x_{u}))-  \E^x(\Ge f(x_{u}))\right)du\right)^2 \leq & 16t^2_0M\Gamma(f,f)(\Delta_i ( x))+\\   &
 +c(t-s)\int_0^{t-s} \E^x\Gamma((f,f))(x_{u}) du 
 \end{align*}
 where $c(t)=t_0 8M(C_1+1)+ 2t(1+C_{1})M$. 
\end{Lemma}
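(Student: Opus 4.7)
The plan is to split $\int_0^{t-s}$ into $\int_0^{t_0}$ and $\int_{t_0}^{t-s}$, use $(a+b)^2\le 2a^2+2b^2$ to separate the two squared integrals, apply Lemma \ref{techn1} to the far piece, and handle the near piece by a diagonal/off-diagonal decomposition of the integrand that avoids the small-time singularity of $\pi_u(\Delta_i(x),\Delta_i(x))/\pi_u(x,\Delta_i(x))$ appearing in Lemma \ref{prop:ctrlsumratio}.

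For the far piece, Lemma \ref{techn1} directly yields $(\int_{t_0}^{t-s}\cdot)^2\le (t-s)(1+C_1)M\int_{t_0}^{t-s}\E^x\Gamma(f,f)(x_u)\,du$; enlarging the domain of integration on the right from $[t_0,t-s]$ to $[0,t-s]$ (which only increases the nonnegative integrand) and multiplying by $2$ produces the term $2(t-s)(1+C_1)M\int_0^{t-s}\E^x\Gamma(f,f)(x_u)\,du$ in the announced bound.

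For the near piece the argument of Lemma \ref{techn1} cannot be reused because Lemma \ref{prop:ctrlsumratio} only controls $\sum_y \pi_u^2(\Delta_i(x),y)/\pi_u(x,y)$ uniformly in $u\le t_0$ after removing the single term $y=\Delta_i(x)$, which grows like $C_2/u$ as $u\to 0$. I would therefore expand
\[
(\E^{\Delta_i(x)}-\E^x)(\Ge f(x_u)) = \bigl(\pi_u(\Delta_i(x),\Delta_i(x))-\pi_u(x,\Delta_i(x))\bigr)\Ge f(\Delta_i(x)) + \sum_{y\neq\Delta_i(x)}\bigl(\pi_u(\Delta_i(x),y)-\pi_u(x,y)\bigr)\Ge f(y),
\]
and estimate the two pieces separately. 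For the diagonal contribution the probability difference is at most $2$ and $|\Ge f(\Delta_i(x))|^2\le M\Gamma(f,f)(\Delta_i(x))$ by the same Cauchy-Schwarz used in Lemma \ref{techn1}, so after integrating over $[0,t_0]$ and squaring this piece is at most $4t_0^2 M\Gamma(f,f)(\Delta_i(x))$. For the off-diagonal sum the Cauchy-Schwarz argument of Lemma \ref{techn1} applies verbatim: using $(a-1)^2\le 2a^2+2$ and the first bullet of Lemma \ref{prop:ctrlsumratio} one gets $\sum_{y\neq\Delta_i(x)} \pi_u(x,y)\bigl(\pi_u(\Delta_i(x),y)/\pi_u(x,y)-1\bigr)^2 \le 2(C_1+1)$, so the integrand is bounded in absolute value by $\sqrt{2(C_1+1)M\E^x\Gamma(f,f)(x_u)}$, and its squared integral over $[0,t_0]$ is at most $2t_0(C_1+1)M\int_0^{t_0}\E^x\Gamma(f,f)(x_u)\,du$. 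Combining the two parts with another $(a+b)^2\le 2a^2+2b^2$ and multiplying by the outer factor of $2$ yields exactly the announced constants $16t_0^2M$ and $8t_0M(C_1+1)$.

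The only real obstacle is the $1/u$ singularity in $\pi_u(\Delta_i(x),\Delta_i(x))/\pi_u(x,\Delta_i(x))$ near $u=0$, which is precisely what motivates the diagonal/off-diagonal split above; once this split is performed the remainder of the proof is a routine combination of Cauchy-Schwarz with the already established Lemmas \ref{prop:ctrlsumratio} and \ref{techn1}.
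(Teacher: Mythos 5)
Your proposal is correct and follows essentially the same route as the paper: split the time integral at $t_0$, bound the far piece by Lemma \ref{techn1} and enlarge the integration domain using nonnegativity of the carr\'e du champ, and on $(0,t_0)$ separate the term $y=\Delta_i(x)$ (bounded crudely by $4t_0^2 M\Gamma(f,f)(\Delta_i(x))$ via Cauchy--Schwarz) from the remaining sum, which is controlled by the first bullet of Lemma \ref{prop:ctrlsumratio} exactly to dodge the $C_2/t$ singularity. The only cosmetic difference is your use of $(a-1)^2\le 2a^2+2$ where the paper uses $(a-1)^2\le a^2+1$ (valid since $a\ge 0$); both land within the stated constants $16t_0^2M$ and $c(t)$.
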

\begin{proof} 
In order to calculate a bound for 
\begin{align}\nonumber
&\E^{\Delta_i ( x) }( \Ge f(x_{u}))-  \E^x(\Ge f(x_{u}))\nonumber  
 \end{align}
 we will need to control the ration $\frac{\pi_u^2(\Delta_i(x),y)}{\pi_u(x,y)}$.  As shown in Lemma \ref{prop:ctrlsumratio},  this ratio  depends on time when $u\leq t_0$, otherwise it is bounded by a constant. For this reason we will start by breaking the integration variable of the time $t$ into two domains, $(0,t_0)$ and $(t_0,t-s)$.
   \begin{align}\nonumber
\I_1:=&\left(\int_0^{t-s}\left(\E^{\Delta_i ( x) }( \Ge f(x_{u}))-  \E^x(\Ge f(x_{u}))\right)du\right)^2\leq \\ \nonumber      &2\underbrace{\left(\int_{t_0}^{t-s}(\E^{\Delta_i ( x) }-\E^x) \sum_{ i = 1 }^N \phi (x_{u}^i)(f(\Delta_i ( x_{u})-f(x_{u})) du\right)^2}_{\II_1} +
\\  &+2\underbrace{
\left(\int_0^{t_0}(\E^{\Delta_i ( x) }-\E^x) \sum_{ i = 1 }^N \phi (x_{u}^i)(f(\Delta_i ( x_{u}))-f(x_{u})) du\right)^2  }_{:=\II_2} .\label{pr2.4}
 \end{align}
The first   summand $\II_1$ is upper bounded by the previous lemma. To bound the second term $\II_2$ 
  on the right hand side of (\ref{pr2.4}) we write
 \begin{align}\nonumber
&\II_2    \leq  \\   &  \nonumber  2\underbrace{\left(\int_0^{t_0}( \pi_u(\Delta_i ( x),\Delta_i ( x)) -\pi_u(x,\Delta_i ( x)) ) \sum_{ i = 1 }^N \phi (\Delta_i ( x)^i)(f(\Delta_i ( \Delta_i ( x))-f(\Delta_i ( x))) du\right)^2}_{:=\III_1} + 
\\   &    +2\underbrace{\left(\int_0^{t_0}(\sum_{y\in D, y\not=\Delta_i (x)}(\pi_u(\Delta_i (x),y)-\pi_u(x,y))\sum_{ i = 1 }^N \phi (y^i)(f(\Delta_i ( y)-f(y)) du\right)^2}_{:=\III_2}.  \label{pr2.4+1}
\end{align} 
The   distinction on the two cases, whether after time $u$ the neurons configuration is $\Delta_i(x)$ or not,  relates to the two different bounds Lemma  \ref{prop:ctrlsumratio} provides  for the fraction $ \frac{\pi_t^2(\Delta^i(x),y)}{\pi_t(x,y)} $ whether $y$ is $ \Delta_i(x) $ or not. We will first calculate the second term on the right hand side. For this term  we will work similar to Lemma \ref{techn1}.  At first we will apply  the Holder inequality on the time integral after we first divide with the  normalisation constant $t_0$. This will give   
  \begin{align*}\nonumber
& \nonumber \III_2  \leq\\ &  \nonumber t_0 \int_0^{t_0}\left(\sum_{y\in D, y\not=\Delta_i (x)}(\frac{\pi_u(\Delta_i (x),y)}{\pi_u(x,y)}-1)\pi_u(x,y)
\sum_{ i = 1 }^N \phi (y^i)(f(\Delta_i ( y))-f(y)\right) ^2du. 
\end{align*} 
 Now we will use the Cauchy-Schwarz inequality  in the first sum. We will then obtain the following
  \begin{align}\nonumber
\III_2 \leq &  \nonumber t_0 \int_0^{t_0}\left[\sum_{y\in D, y\not=\Delta_i (x)}\pi_u(x,y)(\frac{\pi_u(\Delta_i (x),y)}{\pi_u(x,y)}-1)^2\right]
* \\   & \left[ \sum_{y\in D, y\not=\Delta_i (x)}\pi_u(x,y) \left( \sum_{ i = 1 }^N \phi (y^i)(f(\Delta_i ( y))-f(y)\right)^2\right]du.  \label{pr2.7} \end{align} 
The first term on the last product   can    be upper bounded from Lemma  \ref{prop:ctrlsumratio} 
  \begin{align}
   \nonumber \sum_{y\in D, y\not=\Delta_i (x)}\pi_u(x,y)(\frac{\pi_u(\Delta_i (x),y)}{\pi_u(x,y)}-1)^2\leq &\sum_{y\in D, y\not=\Delta_i (x)}(\frac{\pi_u^2(\Delta_i (x),y)}{\pi_u(x,y)}+\pi_u(x,y)) \\  \leq & (C_1+1).
 \label{pr2.8} \end{align} 
 While for the second term involved in the product of (\ref{pr2.7}) we can write
  \begin{align*}\nonumber
 \nonumber  \sum_{y\in D, y\not=\Delta_i (x)}&\pi_u(x,y) \left( \sum_{ i = 1 }^N \phi (y^i)(f(\Delta_i ( y))-f(y)\right)^2 \leq \\ & \nonumber \leq \sum_{y\in D, y\not=\Delta_i (x)}\pi_u(x,y)   ( \sum_{ i = 1 }^N \phi (y^i)) \sum_{ i = 1 }^N \phi (y^i)(f(\Delta_i (y))-f(y))^2\leq  \\ &  \leq M\sum_{y}\pi_u(x,y)  \Gamma(f,f)(y) = \\ & =M\E^x \Gamma(f,f)(x_u)
 \nonumber \end{align*}  
 where for the first bound we made use once more of the Holder inequality, after we divided with the appropriate normalisation constant  $\sum_{ i = 1 }^N \phi (x_{u}^i)$.  If we put   the last  bound together  with  (\ref{pr2.8}) into   (\ref{pr2.7}), we obtain
   \begin{align}
\III_2 \leq &  t_0M(C_1+1) \int_0^{t_0}
\E^x \Gamma(f,f)(x_u)   du.  \label{pr2.8+} \end{align} 

 We now calculate the first summand  of (\ref{pr2.4+1}). Notice that in this case we cannot use the analogue bound from Lemma \ref{prop:ctrlsumratio},  that is $
\frac{\pi_t^2(\Delta^i(x),\Delta^i(x))}{\pi_t(x,\Delta^i(x))} \leq   \frac{C_2}{t},
$ as we did for $\III_2$,  since   that will lead to a final upper bound $\III_1\leq   t_0M(C_1+1) \int_0^{t_0}\frac{1}{u}
\E^x \Gamma(f,f)(x_u)   du $ which may diverge. Instead, we will bound the $\III_1$ by the carr\'e du champ of the function after the first jump.  We can write
\begin{align*}
\III_1     \leq     &  \nonumber  4\left(\int_0^{t_0}(\sum_{ i = 1 }^N \phi (\Delta_i ( x)^i)\vert f(\Delta_i ( \Delta_i ( x)))-f(\Delta_i ( x))\vert du\right)^2 \\ \leq  &  \nonumber  4t_0^2( \sum_{ i = 1 }^N \phi (\Delta_i ( x)^i))^2\left(\sum_{ i = 1 }^N \frac{\phi (\Delta_i ( x)^i)}{\sum_{ i = 1 }^N \phi (\Delta_i ( x)^i)}\vert f(\Delta_i ( \Delta_i ( x)))-f(\Delta_i ( x))\vert \right)^2   \end{align*}
where above we divided with the   normalisation constant $\sum_{ i = 1 }^N \phi (\Delta_i ( x)^i)$, since $\phi(x)\geq \delta$.  We can now    apply the Holder inequality on the sum,   so that 
  \begin{align}\nonumber
\III_1     \leq      
  4t^2_0M(\sum_{ i = 1 }^N \phi (\Delta_i ( x)^i) (f(\Delta_i ( \Delta_i ( x)))-f(\Delta_i ( x))^2)  =
  4t^2_0M\Gamma(f,f)(\Delta_i ( x)).
\end{align} 
 If we combine this  together with (\ref{pr2.8+})  and  (\ref{pr2.4+1}) we get the following bound for the second term of (\ref{pr2.4})
 \begin{align}\nonumber
\II_2 \leq   \nonumber 8t^2_0M\Gamma(f,f)(\Delta_i ( x))    +4(C_1+1) t_0M \int_0^{t_0}
\E^x \Gamma(f,f)(x_u)du. 
\end{align}  

The last one together with the bound shown in  Lemma \ref{techn1} for the first term $\II_1$ of  (\ref{pr2.4}) gives
 \begin{align*} \I_1\leq & t^2_016M\Gamma(f,f)(\Delta_i ( x))+t_0 8M(C_1+1)\int_0^{t_0}\E^x \Gamma(f,f)(x_u)du+\\ &  +2 (t-s)(1+C_{1})M\int_{t_0}^{t-s} \E^x\Gamma((f,f))(x_{u}) du \leq \\ \leq & t^2_016M\Gamma(f,f)(\Delta_i ( x))
 +2M(C_1+1)(4t_0  +   (t-s))\int_0^{t-s} \E^x\Gamma((f,f))(x_{u}) du. 
 \end{align*}
  since  the carr\'e  du champ is non negative, as shown below
\begin{align*} \Gamma (f,f)=\frac{1}{2}(\Ge (f^2)- 2 f\Ge f)=\lim_{t\downarrow 0} \frac{1}{2t} P_t f^2 - (P_t f)^2\geq 0
\end{align*}
by Cauchy-Swartz inequality. 
\end{proof}
We have obtained all the technical results that we need in order to show the Poincar\'e inequality for the semigroup $P_t$ for general initial configurations. 

\subsection{proof of Theorem \ref{theorem2}}\label{subsec2.2}

˜

  Denote $P_tf(x)=\E^xf(x_t)$. Then
 \begin{align}
 P_t f^2(x)-(P_t f(x))^2=\int_0^t \frac{d}{ds}P_s (P_{t-s}f)^2(x) ds=\int_0^t P_s \Gamma  (P_{t-s}f,P_{t-s}f)(x) ds \label{pr2.1}
 \end{align}
 since $\frac{d}{ds}P_s= \Ge P_s =P_s \Ge$.

 We can write 
  \begin{align}
\Gamma  (P_{t-s}f,P_{t-s}f)(x) = \sum_{ i = 1 }^N \phi (x^i) ( \E^{\Delta_i ( x) }f(x_{t-s})-\E^{x}f(x_{t-s}))^2. 
\label{pr2.2} \end{align}

 If we could use the translation property $\E^{x+y}f(z)=\E^{x}f(z+y)$ used for instance in proving Poincar\'e and modified log-Sobolev inequalities in \cite{W-Y} and \cite{A-L}, then we could bound relatively easy the carr\'e  du champ of the expectation of the functions by the carr\'e  du champ of the functions themselves, as demonstrated below
\begin{align*}\sum_{ i = 1 }^N \phi (x^i)( \E^{\Delta_i ( x) }f(x_{t-s})-\E^{x}f(x_{t-s}))^2=& \sum_{ i = 1 }^N \phi (x^i) (\E^{x }f(\Delta_i (x_{t-s}))-\E^{x}f(x_{t-s}))^2 \\ \leq & P_{t-s}\Gamma(f,f)(x) \end{align*}
The inequality $\Gamma (P_tf,P_tf)\leq P_t\Gamma (f,f)$ for $t>0$ relates directly with the $\Gamma_2$ criterion  (see \cite{Ba1} and \cite{Ba2}) which states that if $\Gamma_2 (f):=\frac{1}{2}\Ge (\Gamma(f,f))-2\Gamma(f,\Ge l)\geq 0$ then the  Poincar\'e inequality is true, since 
\begin{align*}\frac{d}{ds}(P_s \Gamma(P_{t-s}f,P_{t-s}f))=&\frac{1}{2}P_s (\Ge \Gamma(P_{t-s}f,P_{t-s}f))-2\Gamma (P_{t-s}f,\Ge P_{t-s}f)= \\ =&P_s (\Gamma_2(P_{t-s}f))\geq 0\end{align*}
implies $\Gamma (P_tf,P_tf)\leq P_t\Gamma (f,f)$ (see also \cite{A-L}).

 Unfortunately this is not the case with our PJMP  where the degeneracy of jumps and the memoryless nature of them allows any  neuron $x_i$ to jump to zero from any position, with a probability that depends on the current configuration of the neurons. Moreover, contrary on the case of Poisson processes, our intensity also depends on the position.

 In order to obtain the carr\'e du champ  of the functions we will make use of the Dynkin's formula which will allow us to bound the expectation of a function with the expectation of the infinitesimal generator of the function which  is comparable to the desired  carr\'e  du champ of the function.

So, from Dynkin's formula 
$$\E^x f(x_t)=f(x)+\int_0^{t}\E^x(\Ge f(x_u))du$$we get
  \begin{align*}\nonumber
\left(\E^{\Delta_i ( x)}f(x_{t-s})-\E^{x}f(x_{t-s})\right)^2   \leq &2  \left(f(\Delta_i ( x) )-  f(x)\right)^2 +\\ & +2 \left(\int_0^{{t-s}}\left(\E^{\Delta_i ( x) }( \Ge f(x_{u}))-  \E^x(\Ge f(x_{u}))\right)du\right)^2. 
 \end{align*}
  In order to bound the second term above we will use the bound shown in  Lemma \ref{techn}  
     \begin{align*}\nonumber
\left(\E^{\Delta_i ( x)}f(x_{t-s})-\E^{x}f(x_{t-s})\right)^2   \leq & 2  \left(f(\Delta_i ( x) )-  f(x)\right)^2 +32t^2_0M\Gamma(f,f)(\Delta_i ( x))+\\ 
 &+2c(t-s)\int_0^{t-s} \E^x\Gamma((f,f))(x_{u}) du 
 \end{align*}
 This together with (\ref{pr2.2}) gives 
  \begin{align*}
\Gamma  (P_{t-s}f,P_{t-s}f)(x) \leq &2\Gamma(f,f)( x) +32t^2_0M\sum_{i=1}^n \phi(x^i)\Gamma(f,f)(\Delta_i ( x))+\\ 
 &+2Mc(t-s)\int_0^{t-s} \E^x\Gamma((f,f))(x_{u}) du. 
 \end{align*}
Finally, plugging this in (\ref{pr2.1}) we obtain \begin{align*}
 P_t f^2(x)-(P_t f(x))^2\leq & 2\int_0^tP_s\Gamma(f,f)( x)ds +32t^2_0M\int_0^tP_s\sum_{i=1}^n \phi(x^i)\Gamma(f,f)(\Delta_i ( x))ds+\\ 
 &+2M\int_0^tc(t-s)P_{s}\int_0^{t-s} \E^x\Gamma((f,f))(x_{u}) du ds. 
 \end{align*}
   For the second term we can bound $\phi$ by $M$. For the last term on the right hand side, since the carr\'e du champ is non negative,  we can get
\begin{align*}P_s\int_0^{t-s} \E^x\Gamma((f,f))(x_{u}) du &=P_s\int_s^{t}  P_{w-s}\Gamma((f,f))(x) dw\leq \\ & \leq P_s \int_0^{t }  P_{w-s}\Gamma((f,f))(x) dw=\int_0^{t }  P_{w}\Gamma((f,f))(x) dw.
\end{align*}
where above we used  the   property  of the Markov  semigroup  $P_s P_{w-s}=P_w$ . Since this last quantity does not depend on $s$, and $c(t-s)\leq c(t)$ we further get
\begin{align*}\int_0^t c(t-s)P_s\int_0^{t-s} \E^x\Gamma((f,f))(x_{u}) duds \leq c(t)t\int_0^{t }  P_{w}\Gamma((f,f))(x) dw.
\end{align*}
Putting everything together we finally obtain
\begin{align*}
 P_t f^2(x)-(P_t f(x))^2\leq & (2+2Mc(t)t)\int_0^tP_s\Gamma(f,f)( x)ds \\  &+32t^2_0M^2\sum_{i=1}^n\int_0^tP_s \Gamma(f,f)(\Delta_i ( x))ds. 
 \end{align*}
 And so,  the theorem follows for constants 
\begin{align*}\label{const1} \alpha(t)=2+2Mtc(t)=2+2Mtt_0 8M(C_1+1)+ 4t^2(1+C_{1})M^2  \ \text{and}   \   \beta=32t^2_0M^2.
\end{align*}

\section{proof of the Poincar\'e inequalityies for starting configuration on the domain of the  invariant measure}\label{main3n}

˜

We start by showing that in the case where the initial configuration belongs on the domain $x\in \hat D$ of the invariant measure,  we obtain a strong lower bound for the probabilities $\pi_t(x,y)=P_x(X_t=y) $, for time $t$ big enough, as  presented on the following lemma.

 \begin{Lemma}\label{boundProb} 
 Assume the PJMP as described in (\ref{eq:generator1})-(\ref{phi2}). Then, for every $x\in \hat D$ and $y\in D_x$
 \[\pi_t(x,y)\geq \frac{1}{\theta}\] for $t\geq t_1=\frac{1}{\delta}+t_0$. \end{Lemma}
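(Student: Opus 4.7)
The plan is to bound $\pi_{t_1}(x,y)$ from below by an explicit positive constant and then promote this to arbitrary $t \geq t_1$ via the Markov property. Throughout, I will exploit that $\hat D$ is the (finite) support of the unique invariant measure $\pi$, hence an irreducible communicating class; in particular, starting from $x\in\hat D$ the process remains in $\hat D$ forever, and $y\in D_x \subseteq \hat D$.

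First, I would construct an explicit trajectory from $x$ to $y$. Because $y \in D_x$, there exists by definition a sequence of admissible jumps
\[
x = z_0 \to z_1 \to \cdots \to z_k = y, \qquad z_j = \Delta_{i_{j-1}}(z_{j-1}),
\]
and since every $z_j$ lies in the finite set $\hat D$, this path can be chosen with $k \leq K := |\hat D|$, a bound uniform in $x$ and $y$.

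Second, I would lower-bound $\pi_{t_1}(x,y)$ by the probability of a \emph{scripted} event $E$: partition $[0,t_0]$ into $K$ subintervals of equal length $\ell = t_0/K$ and require that in the $j$-th subinterval (for $j = 1,\dots,k$) neuron $i_{j-1}$ fires exactly once while no other neuron fires; in the remaining $K-k$ subintervals of $[0,t_0]$ and in the tail interval $[t_0,t_1]$ of length $1/\delta$, demand no firing at all. Using $\phi \geq \delta$ and $M := \sup_{x \in D} \phi(x) < \infty$, the probability that in a window of length $\ell$ starting from state $z_{j-1}$ only neuron $i_{j-1}$ fires once is
\[
\int_0^\ell \phi(z_{j-1}^{i_{j-1}})\, e^{-u\,\overline{\phi}(z_{j-1})}\, e^{-(\ell-u)\overline{\phi}(z_j)}\, du \;\geq\; \delta\,\ell\, e^{-\ell NM},
\]
and each ``no firing'' window of length $\ell'$ contributes a factor at least $e^{-\ell' NM}$. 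Multiplying these contributions (legitimate by the strong Markov property), we obtain
\[
\pi_{t_1}(x,y) \;\geq\; \P(E) \;\geq\; (\delta\,\ell\, e^{-\ell NM})^k \, e^{-(K-k)\ell NM} \, e^{-NM/\delta} \;\geq\; \frac{1}{\theta}
\]
for a constant $\theta$ depending only on $N,M,\delta,t_0,K$.

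Third, to extend the bound to arbitrary $t \geq t_1$, I apply the Markov property:
\[
\pi_t(x,y) = \sum_{z \in \hat D} \pi_{t-t_1}(x,z)\, \pi_{t_1}(z,y).
\]
By irreducibility of $\hat D$, $y \in D_z$ for every $z \in \hat D$, so the previous step yields $\pi_{t_1}(z,y) \geq 1/\theta$ for all such $z$; combined with $\pi_{t-t_1}(x,\hat D) = 1$ (invariance of $\hat D$), this gives $\pi_t(x,y) \geq 1/\theta$.

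The main obstacle lies in Step 2: one must carefully track the exponential ``waiting'' factors so that the overall product stays bounded away from zero. This is precisely why $t_1$ must be chosen large enough to accommodate both the $k \leq K$ successive jumps (which requires a fixed time $t_0$) and a macroscopic ``tail'' of length $1/\delta$, and why the uniform lower bound $\phi \geq \delta$ from assumption (\ref{phi2}) is essential — without it, the firing probabilities $\phi(z_{j-1}^{i_{j-1}})$ entering the product could not be controlled uniformly.
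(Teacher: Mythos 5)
Your proposal is correct in substance but takes a genuinely different route from the paper. The paper's own proof is a short soft argument: it uses only that $\hat D$ is finite, that $\pi(y)>\eta>0$ for $y\in\hat D$, and the ergodicity-type convergence $\lim_{t\to\infty}\pi_t(x,y)=\pi(y)$, extracting a uniform $\theta$ by finiteness; strictly speaking this yields the bound only for all sufficiently large $t$ and does not explain the particular threshold $t_1=\frac{1}{\delta}+t_0$ in the statement. You instead argue constructively: you script a minimal jump path from $x$ to $y$ inside subwindows of $[0,t_0]$ (of length at most $K=|\hat D|$ after removing loops), bound each one-jump window from below by $\delta\,\ell\,e^{-\ell NM}$ using the explicit one-jump probability (the paper's $p^i_s(x)$) together with $\phi\ge\delta$ and the uniform bound on $\overline{\phi}$, bound the waiting windows by $e^{-\ell' NM}$, and then propagate to arbitrary $t\ge t_1$ via the Chapman--Kolmogorov decomposition $\pi_t(x,y)=\sum_{z\in\hat D}\pi_{t-t_1}(x,z)\pi_{t_1}(z,y)$. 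What your route buys is an explicit constant $\theta$ and a bound that genuinely holds at the stated $t_1$ (indeed at any fixed positive time); what it costs is the extra input that $\hat D$ is a closed \emph{irreducible} class, i.e.\ $y\in D_z$ for every $z\in\hat D$, which your appeal to ``the support of the unique invariant measure'' justifies somewhat circularly, since neither uniqueness of $\pi$ nor irreducibility is proved in the paper. This gap is mild and fixable: for instance, forcing the $N$ neurons to spike once each in a fixed order leads from any starting configuration to one and the same configuration, so there is a single recurrent class; and in any case this input is no stronger than the unproven convergence $\pi_t(x,y)\to\pi(y)$ on which the paper's own proof rests.
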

 
 \begin{proof} 
Since $\hat D\subset D$ is finite, we know that there exists a strictly positive  constant $\eta$, such that $\pi (x)>\eta>0$ for every $x\in \hat D$. Since, $\lim_{t\rightarrow \infty }\pi_t(x,y)=\pi (y)$ for every $x\in \hat D$, such that $y\in D_x$, we obtain that there exists a $\theta>0$ such that $\pi_t(x,y)>\frac{1}{\theta}$ for every $x \in \hat D$, such that $y\in D_x$, since $\hat D \subset D$ is finite.
 \end{proof}

Taking under account  the last result, we can obtain the first technical bound needed in the proof of  the local Poincar\'e inequality, taking  advantage of the bounds shown for times bigger than $t_1$. 

\begin{Lemma}\label{N1techn}  Assume the PJMP as described in (\ref{eq:generator1})-(\ref{phi2}). Then, for every $z\in \hat D$
 \begin{align*}P_s\left(\int_0^{t-s}\left(\E^{\Delta_i ( z) }( \Ge f(z_{u})\X_{z_u\in \hat D})-  \E^z(\Ge f(z_{u})\X_{z_u\in \hat D})\right)du\right)^2 \leq \\  4\theta^2 t^2 M  \E^x ( \Gamma(f,f)(x_t)\X_{x_t\in \hat D}) 
 \end{align*}
  for every  $t\geq t_1$. 
\end{Lemma}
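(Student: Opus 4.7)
The strategy mirrors that of Lemma \ref{techn1}, but with one crucial difference: the target right-hand side is the single quantity $P_t(\Gamma(f,f)\X_{\hat D})(x)$ rather than a time-integral of $P_u(\Gamma(f,f))$, so the $u$-dependence in the inner estimate must decouple. The mechanism that will achieve this is the uniform lower bound $\pi_t(z,y) \geq 1/\theta$ from Lemma \ref{boundProb}, valid for all $z \in \hat D$, $y \in D_z$, and all $t \geq t_1$, a tool not available in the general setting of Section \ref{local}.

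First I would rewrite the integrand as
\begin{align*}
h(u,z) &:= \E^{\Delta_i(z)}(\Ge f(z_u)\X_{z_u \in \hat D}) - \E^z(\Ge f(z_u)\X_{z_u \in \hat D}) \\
&= \sum_{y \in \hat D} \bigl(\pi_u(\Delta_i(z),y) - \pi_u(z,y)\bigr)\Ge f(y),
\end{align*}
noting that for $z \in \hat D$ the indicator is in fact a.s.\ equal to one since $\hat D$ is closed under the dynamics. The key move is to apply Cauchy-Schwarz on the $y$-sum with weight $\pi_t(z,y)$ (\emph{not} the natural $\pi_u(z,y)$):
\begin{align*}
h(u,z)^2 \leq \Bigl(\sum_{y \in \hat D}\frac{(\pi_u(\Delta_i(z),y) - \pi_u(z,y))^2}{\pi_t(z,y)}\Bigr)\Bigl(\sum_{y \in \hat D}\pi_t(z,y)(\Ge f(y))^2\Bigr).
\end{align*}
For the first factor, Lemma \ref{boundProb} gives $1/\pi_t(z,y) \leq \theta$ (here $t \geq t_1$ is used); combined with $(a-b)^2 \leq a^2+b^2$ and $\sum_y \pi_u(\cdot,y)^2 \leq \sum_y \pi_u(\cdot,y) = 1$, this factor is bounded by $2\theta$. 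For the second factor, the same Cauchy-Schwarz on the generator used in Lemma \ref{techn1} yields $(\Ge f(y))^2 \leq 2M\,\Gamma(f,f)(y)$, so the factor is at most $2M\,P_t(\Gamma(f,f)\X_{\hat D})(z)$. Combining produces the $u$-independent pointwise bound $h(u,z)^2 \leq 4\theta M\,P_t(\Gamma(f,f)\X_{\hat D})(z)$.

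A Cauchy-Schwarz on the time integral followed by an application of $P_s$ then gives
\begin{align*}
P_s\Bigl(\int_0^{t-s}h(u,z)\,du\Bigr)^2(x) \leq 4\theta M t^2\, P_{s+t}(\Gamma(f,f)\X_{\hat D})(x).
\end{align*}
To convert $P_{s+t}$ into $P_t$ I would invoke Lemma \ref{boundProb} a second time: since $\pi_{s+t}(x,y) \leq 1 \leq \theta\,\pi_t(x,y)$ for $y \in \hat D$ (the lower bound $\pi_t(x,y) \geq 1/\theta$ follows from $t \geq t_1$), we get $P_{s+t}(\Gamma(f,f)\X_{\hat D})(x) \leq \theta\,P_t(\Gamma(f,f)\X_{\hat D})(x)$, producing the stated $4\theta^2 t^2 M$ prefactor. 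The only real obstacle is the choice of weight in the $y$-Cauchy-Schwarz: the natural choice $\pi_u(z,y)$ would reintroduce $u$-dependence and generate a time-integral on the right, exactly as in Lemma \ref{techn}; using $\pi_t(z,y)$ instead decouples the $u$-variable, at the cost of two applications of Lemma \ref{boundProb} (hence the factor $\theta^2$).
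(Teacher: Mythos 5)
Your proof is correct and rests on the same two pillars as the paper's own argument: Cauchy--Schwarz in the time variable and over the finite set $\hat D$, together with the uniform lower bound $\pi_t(\cdot,y)\geq 1/\theta$ of Lemma \ref{boundProb} to decouple the $u$-dependence and land on $P_t\Gamma(f,f)(x)$, with the same constant $4\theta^2 t^2 M$. The only real difference is where the lower bound enters: the paper first splits $(a-b)^2\leq 2a^2+2b^2$, passes the squares through with the natural weights $\pi_u(\omega,\cdot)$, collapses $P_s\pi_u$ into $\pi_{s+u}$ and then uses Lemma \ref{boundProb} once via $\pi_{s+u}\leq 1\leq \theta\pi_t$; you instead keep the difference, perform a weighted Cauchy--Schwarz with weight $\pi_t(z,\cdot)$ (first factor of $\theta$) and afterwards convert $P_{s+t}$ into $\theta P_t$ (second factor of $\theta$). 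Your arrangement in fact sidesteps the paper's slightly loose identification of $P_s\left[\pi_u(\Delta_i(\cdot),y)\right]$ with $\pi_{s+u}(x,y)$, which is not literally Chapman--Kolmogorov for the shifted starting point, so this is a mild improvement in rigour rather than a different method. Note that, exactly as in the paper, your argument implicitly assumes that the bound $\pi_t(w,y)\geq 1/\theta$ of Lemma \ref{boundProb} applies to \emph{every} $y\in\hat D$ (i.e.\ every point of $\hat D$ is reachable from $w\in\hat D$) and that $\hat D$ is closed under the dynamics, so that $x_s\in\hat D$ when $x\in\hat D$; these assumptions are shared by the paper's proof and are not a gap specific to yours.
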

\begin{proof} 
We can compute
   \begin{align}\nonumber
\I_2:=&P_s\left(\int_0^{t-s}\left(\E^{\Delta_i ( z) }( \Ge f(z_{u})\X_{z_u\in \hat D})-  \E^z(\Ge f(z_{u})\X_{z_u\in \hat D})\right)du\right)^2\leq \\ \nonumber  \leq    & 2P_s\left(\int_0^{t }\sum_{y\in \hat D}\pi_u(\Delta_i (z),y) \vert\Ge f(y)\vert du\right)^2+2P_s \left(\int_0^{t}\sum_{y\in \hat D}\pi_u(z,y)\vert \Ge f(y)\vert du\right)^2
\end{align} 
 Now we will use three times the Cauchy-Schwarz inequality, to pass the square inside the integral and the two sums. We will then obtain  
    \begin{align}\nonumber
\I_2\leq  &  \nonumber    2\theta^2 t  M\sum_{\omega=z,\Delta_i(z)} \sum_{y\in \hat D}\int_0^{t }P_s\pi_u(\omega,y)   \sum_{ i = 1 }^N \phi (y^i)\left(f(\Delta_i ( y))-f(y)  \right)^2du\\  \nonumber  =& 2\theta^2 t  M\sum_{\omega=z,\Delta_i(z)} \sum_{y\in \hat D}\int_0^{t } \pi_{s+u}(\omega,y)   \sum_{ i = 1 }^N \phi (y^i)\left(f(\Delta_i ( y))-f(y)  \right)^2du\end{align}


where above we used the semigoup property $P_sP_u=P_{s+u}$. Since $t\geq t_1$, we can use Lemma \ref{boundProb} to bound for every $w$ and $y\in \hat D$,  $\pi_{u+s}(w,y)\leq \theta \pi_t(x,y)$. We then obtain 
   \begin{align}
\I_2&\leq   \nonumber   4\theta^2 t^2M \sum_{y\in D}\pi_t(x,y) \sum_{ i = 1 }^N \phi (y^i)\left(f(\Delta_i ( y))-f(y)  \right)^2\\ & = \nonumber   4\theta^2 t^2MP_t\Gamma(f,f)(x).
\end{align}
\end{proof}

\subsection{proof of Theorem \ref{theorem2b}}\label{subsecN1.3}

˜

We can now show  show the Poincar\'e inequality for the semigroup $P_t$ for   initial configurations inside the domain $\hat D$ of the invariant measure $\pi$. 
\begin{proof}
We will work as in the proof of the Poincar\'e inequality of Theorem \ref{theorem2} for general initial conditions. As before, we denote $P_tf(x)=\E^xf(x_t)$. Then
 \begin{align}\nonumber
  \E^x (f^2(x_t)\X_{x_t \in \hat D}) -\left( \E^x ( f(x_t)\X_{x_t \in \hat D})   \right)^2&=\int_0^t \frac{d}{ds}P_s \left( \E^{x_s} ( f(x_{t-s})\X_{x_{t-s} \in \hat D})   \right)^2(x) ds=\\   =\int_0^t P_s \Gamma  ( \E^{x_s} ( f(x_{t-s})&\X_{x_{t-s} \in \hat D}) , \E^{x_s} ( f(x_{t-s})\X_{x_{t-s} \in \hat D}) )(x) ds \label{nnpr2.1}
 \end{align}
 since $\frac{d}{ds}P_s= \Ge P_s =P_s \Ge$.  To bound the carr\'e du champ, as in the general case of Theorem \ref{theorem2}, from (\ref{pr2.2}) and Dynkin's formula we obtain the following
  \begin{align*}\nonumber
\Gamma  (\E^{x_s} ( f(&x_{t-s}) \X_{x_{t-s} \in \hat D}) ,\E^{x_s} ( f(x_{t-s})\X_{x_{t-s} \in \hat D}) )  \leq  2 \Gamma(f,f)(x_s) +\\   \nonumber &+2 \sum_{ i = 1 }^N \phi (x_s^i) \left(\int_0^{{t-s}}\left(\E^{\Delta_i ( x_s) }( \Ge f(x_{u}) \X_{x_{u} \in \hat D})-  \E^{x_s}(\Ge f(x_{u}) \X_{x_{u} \in \hat D})\right)du\right)^2
. \end{align*}
From that and (\ref{nnpr2.1}) and the bound  $M=\sup_{x\in D} \sum_{ i = 1 }^N \phi (x^i)$ on $\phi$, we then  get
\begin{align*}  \E^x (f^2(x_t)\X_{x_t \in \hat D}) -&\left( \E^x ( f(x_t)\X_{x_t \in \hat D})   \right)^2\leq 2\int _0^tP_s\Gamma(f,f)(x)ds+ \\ +2M\sum_{ i = 1 }^N\int_0^t P_s  &\left(\int_0^{{t-s}}\left(\E^{\Delta_i ( x_s) }( \Ge f(x_{u}) \X_{x_{u} \in \hat D})-  \E^{x_s}(\Ge f(x_{u}) \X_{x_{u} \in \hat D})\right)du\right)^2ds.
 \end{align*} 
Since $t\geq t_1$,  we can use  Lemma   \ref{N1techn} to bound the second term on the right hand side 
 \begin{align*}  \E^x (f^2(x_t)\X_{x_t \in \hat D}) -\left( \E^x ( f(x_t)\X_{x_t \in \hat D})   \right)^2\leq&2 \int _0^tP_s\Gamma(f,f)(x)ds \\ & +8\theta^2M^2Nt^3P_t\Gamma(f,f)(x).
 \end{align*} 
 \end{proof}

\section{proof of the Poincar\'e inequalities for the invariant measure}\label{main3.2}

˜

In this section  we  prove a Poincar\' e inequality for the invariant measure $\pi$ presented in Theorem \ref{InvPoin}, using methods developed in   \cite{B-C-G},  \cite{B-G-L} and \cite{C-G-W-W}.

\begin{proof}  At first assume $\pi (f)=0$. We can write
\begin{align*}Var_{\pi}(f)=\int  f^{2}d\pi=\int   f^{2}\X_{\hat D}d\pi .  \end{align*}
We will   follow the method from \cite{SC} used to prove  Spectral Gap for finite Markov chains. Since $\pi(f)=0$,  we can write
\[\int   f^{2}\X_{\hat D}d\pi=\frac{1}{2}\int \int ( f(x)-f(y))^{2}\X_{x\in \hat D}\X_{y\in \hat D}\pi(dx)\pi(dy).\]
Consider $\delta(x,y)=\{ i_{1},i_{2},..., i_{\vert \gamma(x,y)\vert}\}$ the shortest path from  $x\in \hat D$ to $y\in \hat D$, where the  indexes $i_k$ stand for the neuron that spikes. Since $\hat D$ is finite $ \max \{x,y \in \hat D:\vert\delta (x,y)\vert\}$ is finite. We denote   $\tilde x^0=x$ and $\tilde x^{k}=\Delta_{i_k}(\Delta_{i_{k-1}}(...\Delta_{i_1}(x))...)$, for $k=1, ...,\vert \delta (x,y)\vert$, so that $\tilde x^{\vert \delta(x,y)\vert}=y$. So we can write
 \begin{align*}\pi(x)\pi(y) ( f(x)-f(y))^2\leq &\pi (y)\pi (x)  \sum_{j=0}^{\vert \delta(x,y)\vert}( f(\Delta(\tilde x^j)_{i_j})-f(\tilde x^j))^2\\ 
 \leq & \frac{ \pi (y)\pi (x)}{ \delta}  \sum^{\vert \delta(x,y)\vert}_{j=0}\varphi(\tilde x^i_{i_j})( f(\Delta(\tilde x^j)_{i_j})-f(\tilde x^j))^2\end{align*}
 where above we used  that $\phi\geq \delta$. We can now form the car\'e du champ
 \begin{align*}\pi(x)\pi(y) ( f(x)-f(y))^2\leq &\frac{ \pi (y)\pi (x)}{ \delta} \sum^{\vert \delta(x,y)\vert}_{j=0}\sum_{i\in D}\varphi(\tilde x^i_{i_j})( f(\Delta(\tilde x^j)_i)-f(\tilde x^j))^2
\\ 
\leq & \frac{ \pi (y)\pi (x)}{ \min \{x\in \hat D:\pi(x)\}\delta}  \sum^{\vert \delta(x,y)\vert}_{j=0}\pi ((\tilde x^j))\Gamma (f,f)(\tilde x^j).\end{align*}
 We then  have 
\begin{align*}\int   f^{2}\X_{D}d\pi\leq  &\frac{N^2}{2\min \{x\in \hat  D:\pi(x)\}\delta}  \sum_{x\in D}\pi (x)\Gamma (f,f)(x) \\  =&\frac{N^2}{2\min \{x\in \hat D:\pi(x)\}\delta} \pi(\Gamma (f,f)\X_{\hat D}).\end{align*}
Putting all together leads to 
\begin{align*}Var_{\pi}(f)\leq\frac{N^2}{2\min \{x\in \hat D:\pi(x)\}\delta}\int \Gamma(f,f) d\pi.  \end{align*}
\end{proof}

 \section*{Acknowledgements}
 The authors thank Eva L\"ocherbach for careful reading and valuable comments.


\begin{thebibliography}{99}


 
\bibitem{A-L} C. Ane and M. Ledoux, \textit{Rate of convergence for ergodic continuous Markov processes: Lyapunov versus Poincar\'e}. Probab. Theory Relat. Fields 116, 573-602 (2000).
 
\bibitem{ABGKZ} R. Aza\"is, J.B. Bardet, A. Genadot, N. Krell and P.A. Zitt,
\textit{Piecewise deterministic Markov process (pdmps). Recent results}. Proceedings 44, 276-290 (2014). 


 \bibitem{Ba1} D. Bakry, \textit{L'hypercontructivit\'e et son utilisation en th\'eorie des semigroupes}. Ecole d'Et\'e de Probabilit\'es de St-Flour. Lecture Notes in Math., 1581, 1-114, Springer  (1994).  
 
  \bibitem{Ba2} D. Bakry, \textit{On Sobolev and logarithmic Sobolev inequalities for Markov semigroups}. New trends in Stochastic Analysis, 43-75, World Scientific  (1997).  
 
 



 \bibitem{B-C-G} D. Bakry, P. Cattiaux and A. Guillin, \textit{Lyapunov conditions for  super Poincar\'e inequality}. J Funct Anal 254  (3), 727-759 (2008).  
 
\bibitem{B-G-L} D. Bakry, I. Gentil, M. Ledoux, \textit{
Analysis and geometry of Markov diffusion operators.} 
Grundlehren der Mathematischen Wissenschaften 348, Springer, Berlin, (2014).





 \bibitem{C-G-W-W} P. Cattiaux, A. Guillin, F-Y. Wang and L. Wu, \textit{Lyapunov conditions for  super Poincar\'e inequality}. J Funct Anal 256  (6), 1821-1841 (2009). 

 
\bibitem{Chaf} D. Chafai,
 \textit{Entropies, convexity, and functional inequalities}. J. Math. Kyoto Univ. 44 (2), 325 - 363 (2004). 
 
  \bibitem{C17} J. Chevalier,
 \textit{Mean-field limit of generalized Hawkes processes}. Stochastic Processes and their Applications 127 (12), 3870 - 3912 (2017).


\bibitem{C-D-M-R}  A. Crudu, A. Debussche, A. Muller and O. Radulescu
\textit{Convergence of stochastic gene networks to hybrid piecewise deterministic processes}. The Annals of Applied Probability 22, 1822-1859, (2012).

\bibitem{Davis84} M.H.A. Davis
\textit{Piecewise-derministic Markov processes: a general class off nondiffusion stochastic models} J. Roy. Statist. Soc. Ser. B, 46(3) 353 - 388 (1984).

\bibitem{Davis93} M.H.A. Davis
\textit{Markov models and optimization} Monographs on Statistics and Applied Probability, vol. 49 Chapman $\&$ Hall,  London. (1993)


  



\bibitem{D-SC}  P. Diaconis and L. Saloff-Coste
\textit{Logarithmic Sobolev inequalities for finite Markov Chanis}. The Annals of Applied Probability 6, 695-750, (1996).


 \bibitem{D-L-O} A. Duarte, E. L\"ocherbach and G. Ost, \textit{Stability, convergence to equilibrium and simulation of non-linear Hawkes Processes with memory kernels given by the sum of Erlang kernels }. (arxiv).  
 
\bibitem{D-O} A. Duarte and G.Ost,
\textit{A model for neural activity in the absence of external stimuli}  Markov Processes and Related Fields 22, (2014). 
 
\bibitem{G-L} A. Galves and E. L\"ocherbach, \textit{Infinite Systems of Interacting Chains with Memory of Variable Length-A Stochastic Model for Biological Neural Nets}. J Stat Phys 151, 896-921 (2013). 
 

 
  \bibitem{G-Z} A.Guionnet  and B.Zegarlinski,
\textit{ Lectures on Logarithmic Sobolev Inequalities}, IHP Course 98,   1-134 in Seminare de Probabilite XXVI, Lecture Notes in
         Mathematics 1801, Springer (2003).
 
 \bibitem{H-R-R} N. Hansen, P. Reynaud-Bouret and V. Rivoirard
\textit{Lasso and probabilistic inequalities for multivariate point processes}. Bernoulli, 21(1) 83-143 (2015). 
 
\bibitem{H-K-L} P. Hodara, N. Krell and E. L\"ocherbach, \textit{Non-parametric estimation of the spiking rate in systems of interacting neurons}. E. Stat Inference Stoch Process,  1-16 (2016). 

\bibitem{H-L} P. Hodara and E. L\"ocherbach,
\textit{Hawkes processes with variable length memory and an infinite number of components}. Adv. Appl. Probab 49, 84-107 (2017).  



\bibitem{L17} E. L\"ocherbach,
\textit{Absolute continuity of the invariant measure in piecewise deterministic Markov Processes having degenerate jumps}. Stochastic Processes and their Applications (2017).

 
\bibitem{PTW-10} K. Pakdaman, M. Thieulen and G. Wainrib,
\textit{Fluid limit theorems for stochastic hybrid systems with application to neuron models}. Adv.\ Appl.\ Probab 42, 761-794, (2010). 



 \bibitem{SC}  L. Saloff-Coste,
\textit{ Lectures on finite Markov chains.} IHP Course 98,  Ecole d' Ete  de Probabilites de Saint-Flour XXVI, Lecture Notes in
         Math. 1665, 301-413, Springer (1996).



         

 \bibitem{W-Y} F-Y. Wang and C. Yuan, \textit{Poincar\'e inequality on the path space of Poisson point processes}. J Theor Probab 23 (3), 824-833 (2010).
 
\end{thebibliography}
\end{document}